\newtheorem{theorem}{Theorem}[section]
\newtheorem{proposition}[theorem]{Proposition}
\newtheorem{corollary}[theorem]{Corollary}
\newtheoremstyle{definition}
  {6pt}
  {6pt}
  {}
  {}
  {\bfseries}
  {.}
  {.5em}
  {}%
\theoremstyle{definition}
\newtheoremstyle{remark}
  {6pt}
  {6pt}
  {}
  {}
  {\bfseries}
  {.}
  {.5em}
  {}%
\theoremstyle{remark}
\newtheoremstyle{note}
  {6pt}
  {6pt}
  {}
  {}
  {\bfseries}
  {.}
  {.5em}
  {}%
\theoremstyle{note}
\newtheorem{note}[theorem]{Note}
\renewcommand\@makefntext[1]{%
\setlength\parindent{1em}%
\noindent \makebox[1.8em][r]{}{#1}} \makeatother
\begin{document}
\parskip 4pt
\large \setlength{\baselineskip}{15 truept}
\setlength{\oddsidemargin} {0.5in} \overfullrule=0mm
\date{}
\title{\bf \large  MIXED MULTIPLICITIES AND  THE MULTIPLICITY \\OF REES MODULES OF REDUCTIONS  \\(Published in J. Algebra Appl.)
}
\def\b{\vntime}
\author{
Truong Thi Hong Thanh and  Duong Quoc Viet\\
\small Department of Mathematics, Hanoi National University of Education\\
\small 136 Xuan Thuy Street, Hanoi, Vietnam\\
\small Email: thanhtth@gmail.com, vduong99@gmail.com}
 \date{}
\maketitle \centerline{\parbox[c]{10.1cm}{ \small{\bf ABSTRACT:} This paper shows that mixed
multiplicities and the multiplicity of Rees modules of good
filtrations and that of their reductions are the same. As an application of this result, we obtain interesting results on
  mixed multiplicities
 and the multiplicity of Rees modules.}}
 \section{Introduction} \noindent
\noindent In past years, mixed multiplicities and the multiplicity
of Rees modules have attracted much attention (see e.g. 2, 6$-$9, 13$-$31). However
up to now, results on the relationship between mixed multiplicities (multiplicities of Rees modules) and mixed multiplicities (multiplicities of Rees modules) of reductions, even in the case of ideals,  are still restricted.
 In this paper, wishing to have broader applications, we consider this problem in the context of good filtrations.

\enlargethispage{1cm} \footnotetext{\begin{itemize}\item[ ]{\bf Mathematics Subject  Classification (2010):} Primary 13H15.
Secondary 13C15, 13D40, 14C17.  \item[ ]{\bf  Key words and
phrases:} Multiplicity, mixed multiplicity, good filtration, Rees
ring.
\end{itemize}}

Let $(A, \frak m)$ be a Noetherian local
ring with maximal ideal $\frak m$; $M$ be a finitely generated
$A$-module.
 A {\it filtration} of ideals
$F=\{(F)_n\}_{n\geqslant0}$ in $A$ is a decreasing chain of ideals
$A=(F)_0\varsupsetneq (F)_1\supseteq\cdots\supseteq (F)_n\cdots$
such that $(F)_{m}(F)_{n}\subseteq (F)_{m+n}$ for all
$m,n\geqslant0.$ For any ideal $I$ of $A,$ the filtration
$\{I^n\}_{n\geqslant0}$ determined by the powers of $I$ is called
the {\it $I$-adic filtration}. Let $I_1, \ldots, I_s$ be ideals
and $ F_1 = \{(F_1)_n\}_{n\geqslant 0},\ldots,$ $F_s =
\{(F_s)_n\}_{n\geqslant0}$ be filtrations in $A$. Put
\begin{align*}&{\bf n} =
(n_1,\ldots,n_s);{\bf k} = (k_1,\ldots,k_s); {\bf 0}=(0,\ldots,0);
{\bf 1}=(1,\ldots,1);\\ &{\bf n^k}= n_1^{k_1}\cdots n_s^{k_s};
\mathbf{k}!= k_1!\cdots k_s!; \;|{\bf k}| = k_1 + \cdots + k_s;\\
 &{\bf I} = I_1, \ldots, I_s; {\bf I}^{[\bf k]} = I_1^{[k_1]}, \ldots, I_s^{[k_s]};\;
 \mathbb{I}^{\bf n} = I_1^{n_1} \cdots I_s^{n_s};\\
&\mathbf{F} = F_1, \ldots, F_s;\;{\bf F}^{[\bf k]} = F_1^{[k_1]},
\ldots, F_s^{[k_s]};   \; \mathbb{F}_{\bf n} = (F_1)_{n_1} \cdots
(F_s)_{n_s}. \end{align*} Set $T^{\bf n}= t_1^{n_1}\cdots
t_s^{n_s},$ here $t_i$ is a variable over $A$ for all
$i=1,\ldots,s.$ Denote by $$\frak R(\mathrm{\bf I}; A) =
  \bigoplus_{\bf n\geqslant 0}\mathbb{ I}^{\mathrm{\bf n}}T^{\bf n}\; \text{ and }\; \frak R(\mathrm{\bf I}; M) =
   \bigoplus_{\bf n \geqslant 0}\mathbb{ I}^{\mathrm{\bf n}}MT^{\bf n} $$
  the multi-Rees algebra and  the multi-Rees module of the ideals $\mathbf{I}$ with respect to $M$, respectively; and by
  $\frak R(\mathrm{\bf F}; A) =
  \bigoplus_{\bf n\geqslant 0}\mathbb{F}_{\mathrm{\bf n}}T^{\bf n} \; \text{ and }\; \frak R(\mathrm{\bf F}; M) =
  \bigoplus_{\bf n\geqslant 0}\mathbb{F}_{\mathrm{\bf n}}MT^{\bf n}$
   the multi-Rees algebra and  the multi-Rees module with respect to $M$ of the filtrations $\mathbf{F}$, respectively.
   Set $\frak R(\mathrm{\bf I}; A)_+ = \bigoplus_{|{\bf n}|> 0}\mathbb{ I}^{\mathrm{\bf n}}T^{\bf n}$ and  $\frak R(\mathrm{\bf F}; A)_+ = \bigoplus_{|{\bf n}|> 0}\mathbb{F}_{\mathrm{\bf n}}T^{\bf n}$.

A filtration $ F$ is called an {\it $I$-good filtration} if
$I(F)_n\subseteq (F)_{n+1}$ for all $n\geqslant0$ and
$(F)_{n+1}=I(F)_n$ for all large $n$. In this case, $I$ is  also
called a {\it reduction} of $ F$. The filtration $F$ is  called a
{\it good filtration} if it is an $I$-good filtration for some
ideal $I.$
 A
 filtration $F$ is called {\it $\mathfrak{m}$-primary} if $(F)_1$ is an
$\mathfrak{m}$-primary ideal.  Let $F$ be an
$\mathfrak{m}$-primary good filtration and $\mathbf{F}$ be good
filtrations of ideals in $A$ such that $I = (F_1)_1 \cdots
(F_s)_1$ is not contained in $\sqrt{\mathrm{Ann}M}$. Set $q=\dim
M/0_M:I^\infty$. Then the paper shows  that  $
\ell_A\Big(\frac{(F)_{n_0}\mathbb{F}_{\bf
n}M}{(F)_{n_0+1}\mathbb{F}_{\bf n}M}\Big)$
 is a polynomial of total degree $q-1$ in $n_0, \bf n$ for all large  $n_0, \bf n$ (see Proposition \ref{pr1}).
 Now if we  write the terms of total degree $q-1$ in this polynomial in the form
$\sum_{k_0 + |\mathbf{k}|=q-1}e(F^{[k_0 +
1]},\mathbf{F}^{[\mathbf{k}]}; M)
\frac{n_0^{k_0}\mathbf{n}^{\mathbf{k}}}{k_0!\mathbf{k}!},$ then
$e(F^{[k_0 + 1]},\mathbf{F}^{[\mathbf{k}]}; M)$ are non-negative
integers not all zero. And $e(F^{[k_0 +
1]},\mathbf{F}^{[\mathbf{k}]}; M)$ is called
 the {\it mixed multiplicity} of $M$ with respect to the good filtrations $ F, \mathbf{F}$
 of the type $(k_0 + 1,\mathbf{k})$.
Remember that in the case where  $F$ is a $J$-adic filtration and
$ F_i$ is an $I_i$-adic filtration for all $i = 1, \ldots, s$,
$e(F^{[k_0 + 1]},\mathbf{F}^{[\mathbf{k}]}; M)$ is denoted by
$e(J^{[k_0 + 1]},\mathbf{I}^{[\mathbf{k}]}; M)$ and called the
{\it mixed multiplicity} of $M$ with respect to the ideals $J,
\mathbf{I}$ of the type $(k_0 + 1,\mathbf{k})$ (see e.g. \cite{MV,
Vi1, VD}). And  as one might expect, we obtain the following result
which is also the main theorem of this paper.

\begin{theorem}[Theorem \ref{thm2.3}] Let $ F$ be an $\mathfrak{m}$-primary good
filtration and  $ \mathbf{F}$ be good filtrations of ideals in $A$
such that $I= (F_1)_1 \cdots (F_s)_1$ is not contained in
$\sqrt{\mathrm{Ann}M}$. Let $J,  I_1, \ldots, I_s$ be reductions of
$ F,  F_1, \ldots,  F_s$, respectively. Let $\frak I$ be an
$\mathfrak{m}$-primary ideal of $A$. Then the following statements
hold.
\begin{enumerate}[{\rm (i)}]
\item $e(F^{[k_0 + 1]},\mathbf{F}^{[\mathbf{k}]}; M) = e(J^{[k_0 +
1]},\mathbf{I}^{[\mathbf{k}]};  M)$. \item  Assume  that
$\mathrm{ht}\dfrac{I + \mathrm{Ann}M}{\mathrm{Ann}M}> 0.$ Then we
have
$$e((\frak I, \frak R(\mathrm{\bf F}; A)_+);\frak R(\mathbf{F};
M)) = e((\frak I, \frak R(\mathrm{\bf I}; A)_+); \frak
R(\mathbf{I}; M)).$$
\end{enumerate}
\end{theorem}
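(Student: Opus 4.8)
\emph{Proof plan.}

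\emph{Part (i).} By Proposition~\ref{pr1} the mixed multiplicity $e(F^{[k_0+1]},\mathbf{F}^{[\mathbf{k}]};M)$ is the coefficient of $\tfrac{n_0^{k_0}\mathbf{n}^{\mathbf{k}}}{k_0!\,\mathbf{k}!}$ in the degree $q-1$ part of the numerical polynomial $B_F(n_0,\mathbf{n})=\ell_A\!\big((F)_{n_0}\mathbb{F}_{\mathbf{n}}M/(F)_{n_0+1}\mathbb{F}_{\mathbf{n}}M\big)$ (for $n_0,\mathbf{n}\gg0$), and likewise $e(J^{[k_0+1]},\mathbf{I}^{[\mathbf{k}]};M)$ is read off from $B_J(n_0,\mathbf{n})=\ell_A\!\big(J^{n_0}\mathbb{I}^{\mathbf{n}}M/J^{n_0+1}\mathbb{I}^{\mathbf{n}}M\big)$. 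So it is enough to show $B_F$ and $B_J$ have the same top homogeneous component, and the plan is to prove the sharper statement $B_F(n_0,\mathbf{n})=B_J(n_0-c_0,\mathbf{n}-\mathbf{c})$ for all large $n_0,\mathbf{n}$ and a fixed shift $(c_0,\mathbf{c})$; translating a polynomial's argument does not touch its top part. Choosing reduction numbers with $(F)_{n_0}=J^{\,n_0-c_0}(F)_{c_0}$ and $(F_i)_{n_i}=I_i^{\,n_i-c_i}(F_i)_{c_i}$, one gets $(F)_{n_0}\mathbb{F}_{\mathbf{n}}M=J^{\,n_0-c_0}\mathbb{I}^{\,\mathbf{n}-\mathbf{c}}\,N$ with $N:=(F)_{c_0}\mathbb{F}_{\mathbf{c}}M$. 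Since $F$ is $\mathfrak{m}$-primary and $\sqrt{(F_i)_{c_i}}=\sqrt{(F_i)_1}=\sqrt{I_i}$, we have $\sqrt{(F)_{c_0}\mathbb{F}_{\mathbf{c}}}=\mathfrak{m}\cap\sqrt{I_1\cdots I_s}=\sqrt{I}$, so $M/N$ is killed by $I^{\,t}$ for some $t$; as $\mathbb{I}^{\,\mathbf{m}}\subseteq I^{\,t}$ whenever $\mathbf{m}\geqslant t\mathbf{1}$, right-exactness of $\mathbb{I}^{\,\mathbf{m}}\otimes_A(-)$ forces $\mathbb{I}^{\,\mathbf{m}}N=\mathbb{I}^{\,\mathbf{m}}M$ for such $\mathbf{m}$. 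Hence $(F)_{n_0}\mathbb{F}_{\mathbf{n}}M=J^{\,n_0-c_0}\mathbb{I}^{\,\mathbf{n}-\mathbf{c}}M$ for $n_0,\mathbf{n}\gg0$, and repeating this with $n_0+1$ gives the identity $B_F(n_0,\mathbf{n})=B_J(n_0-c_0,\mathbf{n}-\mathbf{c})$.

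\emph{Part (ii).} Because $I_i$ is a reduction of $F_i$, $\mathbb{F}_{\mathbf{n}}=\mathbb{I}^{\,\mathbf{n}-\mathbf{c}}\mathbb{F}_{\mathbf{c}}$ for $\mathbf{n}\geqslant\mathbf{c}$, so $\mathfrak{R}(\mathbf{F};A)$ is module-finite (hence integral) over $\mathfrak{R}(\mathbf{I};A)$ and $\mathfrak{R}(\mathbf{F};M)$, generated in degree $\mathbf{0}$ over $\mathfrak{R}(\mathbf{F};A)$, is a finitely generated $\mathfrak{R}(\mathbf{I};A)$-module. Put $\mathfrak{a}=(\mathfrak{I},\mathfrak{R}(\mathbf{I};A)_+)$ and $\mathfrak{b}=(\mathfrak{I},\mathfrak{R}(\mathbf{F};A)_+)$. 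I would first check that $\mathfrak{a}\mathfrak{R}(\mathbf{F};A)$ and $\mathfrak{b}$ have the same integral closure: a homogeneous $x\in\mathfrak{R}(\mathbf{F};A)$ of degree $\mathbf{n}>\mathbf{0}$ satisfies $x^m+a_1x^{m-1}+\cdots+a_m=0$ with $a_j\in\mathfrak{R}(\mathbf{I};A)$, and passing to homogeneous parts we may take $a_j\in\mathfrak{R}(\mathbf{I};A)_{j\mathbf{n}}=(\mathfrak{R}(\mathbf{I};A)_{\mathbf{n}})^j\subseteq(\mathfrak{a}\mathfrak{R}(\mathbf{F};A))^j$, so $\mathfrak{R}(\mathbf{F};A)_+\subseteq\overline{\mathfrak{a}\mathfrak{R}(\mathbf{F};A)}$; since the multiplicity depends only on the integral closure and $(\mathfrak{a}\mathfrak{R}(\mathbf{F};A))^n\mathfrak{R}(\mathbf{F};M)=\mathfrak{a}^n\mathfrak{R}(\mathbf{F};M)$, this gives $e(\mathfrak{b};\mathfrak{R}(\mathbf{F};M))=e(\mathfrak{a};\mathfrak{R}(\mathbf{F};M))$, computed over $\mathfrak{R}(\mathbf{I};A)$. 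Next apply additivity of the multiplicity to $0\to\mathfrak{R}(\mathbf{I};M)\to\mathfrak{R}(\mathbf{F};M)\to C\to0$ with $C_{\mathbf{n}}=\mathbb{F}_{\mathbf{n}}M/\mathbb{I}^{\,\mathbf{n}}M$. From $I_i^{\,r}(F_i)_{n_i}\subseteq(F_i)_{n_i+r}\subseteq I_i^{\,n_i}$ ($r\gg0$, uniform in $\mathbf{n}$) one gets $I^{\,r}\mathbb{F}_{\mathbf{n}}\subseteq\mathbb{I}^{\,\mathbf{n}}$, so $I^{\,r}$ and $\mathrm{Ann}\,M$ annihilate $C$, i.e. $C$ is finitely generated over $\mathfrak{R}(\mathbf{I};A)/(I+\mathrm{Ann}\,M)\mathfrak{R}(\mathbf{I};A)$. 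Any prime of $\mathfrak{R}(\mathbf{I};A)$ over that ideal contracts to a prime of $A$ containing $I+\mathrm{Ann}\,M$, which by $\mathrm{ht}\,\tfrac{I+\mathrm{Ann}\,M}{\mathrm{Ann}\,M}>0$ strictly contains a minimal prime of $\mathrm{Ann}\,M$, hence has coheight $\leqslant\dim M-1$ in $A$ and $\leqslant\dim M-1+s$ in $\mathfrak{R}(\mathbf{I};A)$ (transcendence degree of $\mathfrak{R}(\mathbf{I};A)$ over $A$ equals $s$); thus $\dim C\leqslant\dim M+s-1$. The same hypothesis forces every $I_i$ — equivalently every $(F_i)_1$ — to avoid all minimal primes of $\mathrm{Ann}\,M$, so over a top-dimensional one every grading variable contributes and $\dim\mathfrak{R}(\mathbf{I};M)=\dim\mathfrak{R}(\mathbf{F};M)=\dim M+s$. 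Therefore $\dim C<\dim\mathfrak{R}(\mathbf{F};M)$, so $e(\mathfrak{a};\mathfrak{R}(\mathbf{F};M))=e(\mathfrak{a};\mathfrak{R}(\mathbf{I};M))=e((\mathfrak{I},\mathfrak{R}(\mathbf{I};A)_+);\mathfrak{R}(\mathbf{I};M))$, and chaining with the first step proves (ii).

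\emph{Where the difficulty lies.} In (i) the only substantive step is the passage from $N$ back to $M$, that is, the torsion identity $\mathbb{I}^{\,\mathbf{m}}N=\mathbb{I}^{\,\mathbf{m}}M$ for $\mathbf{m}\gg0$, which reduces to $\sqrt{(F)_{c_0}\mathbb{F}_{\mathbf{c}}}=\sqrt{I}$; the rest is bookkeeping with the Hilbert polynomials of Proposition~\ref{pr1}. The heart of (ii) is the dimension count: one must squeeze out of the single height inequality both that the two Rees modules have dimension exactly $\dim M+s$ and that the discrepancy module $C$, being annihilated by a power of $I$, has strictly smaller dimension and so is invisible to the multiplicity. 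The integral-closure identification of $\mathfrak{b}$ with the extension of $\mathfrak{a}$, while routine, has to be done degree by degree.
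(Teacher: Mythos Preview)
Your argument for part~(i) contains a genuine gap. From the short exact sequence $0\to N\to M\to M/N\to 0$ and the fact that $\mathbb{I}^{\mathbf{m}}$ annihilates $M/N$, you conclude via ``right-exactness of $\mathbb{I}^{\mathbf{m}}\otimes_A(-)$'' that $\mathbb{I}^{\mathbf{m}}N=\mathbb{I}^{\mathbf{m}}M$. This is false: right-exactness only gives $\mathbb{I}^{\mathbf{m}}M/(\mathbb{I}^{\mathbf{m}}M\cap N)\cong \mathbb{I}^{\mathbf{m}}(M/N)=0$, i.e.\ $\mathbb{I}^{\mathbf{m}}M\subseteq N$, not $\mathbb{I}^{\mathbf{m}}M\subseteq\mathbb{I}^{\mathbf{m}}N$. (Note also that $\mathbb{I}^{\mathbf{m}}\cdot(M/N)=0$ does \emph{not} imply $\mathbb{I}^{\mathbf{m}}\otimes_A(M/N)=0$.) A one-line counterexample: $A=k[[x]]$, $M=A$, $N=x^cA$, $\mathbb{I}^{\mathbf{m}}=x^mA$; then $\mathbb{I}^{\mathbf{m}}M=x^mA\neq x^{m+c}A=\mathbb{I}^{\mathbf{m}}N$. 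Consequently your asserted identity $B_F(n_0,\mathbf{n})=B_J(n_0-c_0,\mathbf{n}-\mathbf{c})$ is unproven (and in general too strong: the two Hilbert functions agree only in their top homogeneous parts, not as shifted polynomials). The paper handles exactly this point differently: Note~\ref{no2.1} gives $B_F(n_0+c,\mathbf{n}+c\mathbf{1})=B_J(n_0,\mathbf{n};\mathcal{I}_cM)$, so that $e(F^{[k_0+1]},\mathbf{F}^{[\mathbf{k}]};M)=e(J^{[k_0+1]},\mathbf{I}^{[\mathbf{k}]};\mathcal{I}_cM)$, and the passage from $\mathcal{I}_cM$ back to $M$ is done at the level of mixed multiplicities, via \cite[Proposition~3.1(ii)]{MV} and the additivity formula \cite[Corollary~3.9]{VT1} applied to $0\to\mathcal{I}_c\overline{M}\to\overline{M}\to\overline{M}/\mathcal{I}_c\overline{M}\to 0$ with $\overline{M}=M/0_M:I^\infty$. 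That additivity step is precisely what your torsion shortcut was meant to replace, and it cannot be bypassed.

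Your approach to part~(ii) is different from the paper's and, modulo some dimension bookkeeping that should be tightened, is correct. The paper argues by identifying $I_c\,\mathfrak{R}(\mathbf{F};M)$ with $\mathfrak{R}(\mathbf{I};I_cM)$, comparing composition series directly, and then using additivity over $A$ (via \cite[Corollary~3.9]{VT1}) and over $\mathfrak{R}(\mathbf{F};A)$ to remove the factor $I_c$. You instead show that $\mathfrak{a}\,\mathfrak{R}(\mathbf{F};A)$ is a reduction of $\mathfrak{b}$ and then apply additivity to $0\to\mathfrak{R}(\mathbf{I};M)\to\mathfrak{R}(\mathbf{F};M)\to C\to0$. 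Your route is arguably more conceptual (it isolates the integral dependence of the two Rees algebras), while the paper's is more elementary (no appeal to integral closure of ideals; everything reduces to equalities of lengths). Either way, the crux is the same: the height hypothesis forces the discrepancy module to have strictly smaller dimension.
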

Now, if
$\overline{\frak R(I; A)}$ denotes  the integral closure of the
Rees algebra $\frak R(I; A) = A[It]$ in the polynomial ring
$A[t],$ then applying Theorem 1.1, we obtain the following.
\begin{corollary}[Corollary \ref{co2.7}] Let $A$ be an analytically unramified ring and $I$ an
ideal of positive height of $A.$  Then $e(\overline{\frak R(I;
A)}) =e(\frak R(I; A)).$
\end{corollary}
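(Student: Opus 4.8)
The plan is to deduce the Corollary directly from part (ii) of Theorem 1.1 by exhibiting the integral closure filtration $\{\overline{I^n}\}$ as a good filtration having $I$ (equivalently, the $I$-adic filtration) as a reduction. First I would recall that when $A$ is analytically unramified and $I$ has positive height, the Rees valuation theory (Rees's theorem) gives that $\overline{\mathfrak R(I;A)} = \bigoplus_{n\geqslant 0}\overline{I^n}t^n$ is a finitely generated $\mathfrak R(I;A)$-module, hence a Noetherian ring, and that the filtration $F=\{\overline{I^n}\}_{n\geqslant 0}$ is an $I$-good filtration: indeed $I\,\overline{I^n}\subseteq \overline{I^{n+1}}$ always, and finite generation forces $\overline{I^{n+1}}=I\,\overline{I^n}$ for all large $n$. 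Thus $I$ (i.e. the $I$-adic filtration) is a reduction of $F$ in the sense defined in the excerpt, and $\mathfrak R(\mathbf F;A)=\overline{\mathfrak R(I;A)}$, $\mathfrak R(\mathbf I;A)=\mathfrak R(I;A)$ in the one-filtration case $s=1$.

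Next I would set up the hypotheses of Theorem 1.1(ii) with $M=A$, taking $\mathfrak I=\mathfrak m$ (an $\mathfrak m$-primary ideal). The height condition $\operatorname{ht}\frac{I+\operatorname{Ann}A}{\operatorname{Ann}A}=\operatorname{ht} I>0$ is exactly the positive-height hypothesis on $I$, and the requirement that $I=(F_1)_1=I$ not be contained in $\sqrt{\operatorname{Ann}A}=\sqrt{0}$ likewise follows since $\operatorname{ht} I>0$ forces $I\not\subseteq\sqrt 0$. Applying Theorem 1.1(ii) then yields
$$e\big((\mathfrak m,\,\mathfrak R(\mathbf F;A)_+);\,\mathfrak R(\mathbf F;A)\big)=e\big((\mathfrak m,\,\mathfrak R(I;A)_+);\,\mathfrak R(I;A)\big),$$
that is, $e\big((\mathfrak m,\,\overline{\mathfrak R(I;A)}_+);\,\overline{\mathfrak R(I;A)}\big)=e\big((\mathfrak m,\,\mathfrak R(I;A)_+);\,\mathfrak R(I;A)\big)$.

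Finally I would translate these numbers into the multiplicities $e(\overline{\mathfrak R(I;A)})$ and $e(\mathfrak R(I;A))$ of the Rees rings. For a standard graded algebra $R=\bigoplus_{n\geqslant 0}R_n$ that is Noetherian with $(R_0,\mathfrak m_0)$ local and $R/\mathfrak m_0 R$ of the right dimension, the Hilbert–Samuel multiplicity $e(R)$ with respect to $R_{++}$ (here read through the $\mathfrak m$-primary setup) coincides with $e((\mathfrak m, R_+);R)$ once $\mathfrak m R_+$-adic and the natural grading agree in top degree; I would invoke the standard fact (associativity/additivity of multiplicities and the reduction to the fibre cone) that $e(\mathfrak R(I;A))=e\big((\mathfrak m,\mathfrak R(I;A)_+);\mathfrak R(I;A)\big)$ and likewise for $\overline{\mathfrak R(I;A)}$, both rings having dimension $\dim A+1$ because $\overline{\mathfrak R(I;A)}$ is integral over $\mathfrak R(I;A)$. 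Combining the three displays gives $e(\overline{\mathfrak R(I;A)})=e(\mathfrak R(I;A))$, as claimed. The main obstacle I anticipate is precisely this last translation: one must check that the "multiplicity of the Rees ring" as intended in the paper really is the mixed-type multiplicity $e((\mathfrak m,\mathfrak R(I;A)_+);-)$ appearing in Theorem 1.1(ii) — this requires knowing that $\mathfrak R(I;A)$ and its integral closure share the same dimension and that no degree-shift or associated-prime subtlety intervenes, which is where the analytically unramified and positive-height hypotheses do their real work.
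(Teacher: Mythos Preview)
Your proposal is correct and follows the paper's own argument: recognize $\overline{\mathfrak R(I;A)}=\bigoplus_{n\ge 0}\overline{I^n}t^n$ as the Rees algebra of the $I$-good filtration $\{\overline{I^n}\}$ (Rees's theorem for analytically unramified rings), then apply Theorem~1.1(ii) with $M=A$. Two minor remarks: $(F_1)_1=\overline I$ rather than $I$ (harmless, since they share the same radical and height), and your final ``translation'' worry is just the standing convention that $e(R)$ denotes the multiplicity with respect to the homogeneous maximal ideal $(\mathfrak m,R_+)$, which the paper takes for granted.
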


Moreover, using Theorem 1.1, we easily
obtain  results for multiplicities of good filtrations: expressing
mixed multiplicities of modules in terms of the multiplicity of
 joint reductions (Corollary \ref{co2.8}); the multiplicity  of Rees
modules as the sum of the mixed multiplicities (Corollary
\ref{co2.4}); the additivity on exact sequences (Corollary
\ref{co2.5}) and the additivity and reduction formula (Corollary
\ref{co2.6}) for mixed multiplicities and the multiplicity of Rees
modules; the relationship between mixed multiplicities of modules
   and mixed multiplicities of rings
via the rank of modules (Corollary \ref{co3.b}).

This paper is divided into three  sections. In Section 2, first we
define
 mixed multiplicities of good filtrations (see Proposition
\ref{pr1}); next we prove the main theorem (Theorem \ref{thm2.3}) and
give Corollary \ref{co2.7} and Corollary \ref{co2.8}.
  Section 3 discusses  some applications of Theorem \ref{thm2.3}
  to
  formulas for  mixed multiplicities and multiplicities of Rees
  modules (see Corol. \ref{co2.4}, Corol. \ref{co2.5}, Corol.
\ref{co2.6}, Corol. \ref{co3.b}).

 \section{Mixed multiplicities and multiplicities of Rees modules}
In this section, first we give Proposition \ref{pr1} which is used
for defining mixed multiplicities of good filtrations, and prove
the main theorem (Theorem \ref{thm2.3}) and give some applications
of this theorem to  multiplicities of good filtrations.

 $(A, \frak m)$ denotes a Noetherian local ring
with maximal ideal $\frak m.$ Recall that a {\it filtration} of
ideals $F=\{(F)_n\}_{n\geqslant0}$ in $A$ is a decreasing chain of
ideals $$A=(F)_0\varsupsetneq (F)_1\supseteq\cdots\supseteq
(F)_n\cdots$$ such that $(F)_{m}(F)_{n}\subseteq (F)_{m+n}$ for
all $m,n\geqslant0.$ For any ideal $I$ of $A,$ the filtration
$\{I^n\}_{n\geqslant0}$ determined by the powers of $I$ is called
the {\it $I$-adic filtration}. Let $I$ be an ideal of $A$ and let
$ F=\{(F)_n\}_{n\geqslant0}$ be a filtration of ideals in $A$.
Then we say that $ F$ is an {\it $I$-good filtration} if
$I(F)_n\subseteq (F)_{n+1}$ for all $n\geqslant0$ and
$(F)_{n+1}=I(F)_n$ for all sufficiently large $n$. In this case,
$I\subseteq (F)_1$.  If $ F$  is  an $I$-good filtration, then $I$
is  called a {\it reduction} of $ F$.  The filtration $ F$ is
called a {\it good filtration} if it is an $I$-good filtration for
some ideal $I$ of $A.$  It is easily seen that $ F$ is a good
filtration if and only if $ F$ is an $(F)_1$-good filtration and
in this case, $(F)_1$ is a reduction of $ F$.  There are numerous
examples of non-ideal-adic good filtrations: for an ideal $I$ of
$A$ containing  a non-zero-divisor,
$\{\widetilde{I^n}\}_{n\geqslant 0}$ is a good filtration, where
$\widetilde{I^n}$ is the Ratliff-Rush closure of $I^n$ (see
\cite[Theorem 2.1]{RR}). Furthermore, if $A$ is an analytically
unramified ring, then $\{\overline{I^n}\}_{n\geqslant 0}$ is an
$I$-good filtration (see \cite{R2}) and if $A$ is an analytically
unramified ring containing a field, then $\{(I^n)^*\}_{n\geqslant
0}$ is an $I$-good filtration, here $\overline{I^n}$ and $(I^n)^*$
denote the integral  closure and tight closure of $I^n$,
respectively (see \cite[Ch. 13]{HS}).

    A filtration $F=\{(F)_n\}_{n\geqslant 0}$ is {\it
$\mathfrak{m}$-primary} if $(F)_1$ is an $\mathfrak{m}$-primary
ideal. Note that $\mathfrak{m}$-primary filtrations are sometimes
called Hilbert filtrations (see e.g. \cite{HM}).

 Let
 $M$ be a finitely generated
$A$-module and $I_1, \ldots, I_s$ be ideals of $A.$  Let $$ F_1 =
\{(F_1)_n\}_{n\geqslant 0},\ldots, F_s =
\{(F_s)_n\}_{n\geqslant0}$$ be filtrations in $A$ $(s > 0)$. Set
\begin{align*}&{\bf n} =
(n_1,\ldots,n_s);{\bf k} = (k_1,\ldots,k_s); {\bf 0}=(0,\ldots,0);
 {\bf 1}=(1,\ldots,1) \in \mathbb{N}^s;\\ &{\bf n^k}= n_1^{k_1}\cdots
n_s^{k_s};
\mathbf{k}!= k_1!\cdots k_s!; \;|{\bf k}| = k_1 + \cdots + k_s;\\
 &{\bf I} = I_1, \ldots, I_s; {\bf I}^{[\bf k]} = I_1^{[k_1]}, \ldots, I_s^{[k_s]};\;
 \mathbb{I}^{\bf n} = I_1^{n_1} \cdots I_s^{n_s};\\
&\mathbf{F} = F_1, \ldots, F_s;\;{\bf F}^{[\bf k]} = F_1^{[k_1]},
\ldots, F_s^{[k_s]};   \; \mathbb{F}_{\bf n} = (F_1)_{n_1} \cdots
(F_s)_{n_s}. \end{align*} Put $T^{\bf n}= t_1^{n_1}\cdots
t_s^{n_s},$ here $t_i$ is a variable over $A$ for all
$i=1,\ldots,s.$ Denote by $$\frak R(\mathrm{\bf I}; A) =
  \bigoplus_{\bf n\geqslant 0}\mathbb{ I}^{\mathrm{\bf n}}T^{\bf n}\; \text{ and }\; \frak R(\mathrm{\bf I}; M) =
   \bigoplus_{\bf n \geqslant 0}\mathbb{ I}^{\mathrm{\bf n}}MT^{\bf n} $$
  the multi-Rees algebra and  the multi-Rees module with respect to $M$ of the ideals $\mathbf{I}$, respectively; and by
  $$\frak R(\mathrm{\bf F}; A) =
  \bigoplus_{\bf n\geqslant 0}\mathbb{F}_{\mathrm{\bf n}}T^{\bf n} \; \text{ and }\; \frak R(\mathrm{\bf F}; M) =
  \bigoplus_{\bf n\geqslant 0}\mathbb{F}_{\mathrm{\bf n}}MT^{\bf n}$$
   the multi-Rees algebra and  the multi-Rees module of the filtrations $\mathbf{F}$  with respect to $M$, respectively.
   Set $\frak R(\mathrm{\bf I}; A)_+ = \bigoplus_{|{\bf n}|> 0}\mathbb{ I}^{\mathrm{\bf n}}T^{\bf n}$ and  $\frak R(\mathrm{\bf F}; A)_+ = \bigoplus_{|{\bf n}|> 0}\mathbb{F}_{\mathrm{\bf n}}T^{\bf n}$.

 Let $\mathbf F:   F_1 =
\{(F_1)_n\}_{n\geqslant0},\ldots, F_s = \{(F_s)_n\}_{n\geqslant0}$
 be good filtrations of ideals
 in $A$ such that $I = (F_1)_1 \cdots (F_s)_1$ is not contained in $\sqrt{\mathrm{Ann}M}$ and
 $F =  \{(F)_n\}_{n\geqslant0}$ be an $\mathfrak{m}$-primary good filtration.
 Now, we consider the $\mathbb{N}^{s+1}$-graded
algebra: $$\mathcal S= \bigoplus_{n_0\geqslant 0,{\bf n}\geqslant
{\bf 0}}\frac{(F)_{n_0}\mathbb{F}_{\bf
n}}{(F)_{n_0+1}\mathbb{F}_{\bf n}}$$
 and the $\mathbb{N}^{s+1}$-graded $\mathcal S$-module:
 $ \mathcal M = \bigoplus_{n_0\geqslant 0,{\bf n}\geqslant {\bf 0}}
 \frac{(F)_{n_0}\mathbb{F}_{\bf n}M}{(F)_{n_0+1}\mathbb{F}_{\bf
 n}M}.$ Then we get
 the Bhattacharya function of $ \mathcal M $ (see \cite{Bh})
$$B(n_0,{\bf n}; F, \mathbf F; M ) = \ell_A\Big(\frac{(F)_{n_0}\mathbb{F}_{\bf
n}M}{(F)_{n_0+1}\mathbb{F}_{\bf n}M}\Big).$$ Note that $\mathcal
S$ is not a standard graded algebra. This is an obstruction for
proving $B(n_0,{\bf n}; F, \mathbf F; M )$ is a polynomial for all
large $n_0, \bf n$. So we need the following note which plays an
important role in the approach of this  paper.

\begin{note}\label{no2.1} For $n \geqslant 1$, set $\mathcal{I}_n = (F)_n(F_1)_n\cdots (F_s)_n$.
Assume that $J,  I_1, \ldots, I_s$ are reductions of $ F,  F_1,
\ldots,  F_s$, respectively. Then there exists a large enough
positive integer $c$ such that $(F)_{n_0 + c} = J^{n_0}(F)_c$,
$(F_i)_{n_i + c} = I_i^{n_i}(F_i)_c$ for  all  $n_0, n_i$ and all
$i = 1, \ldots, s$. Thus $(F)_{n_0 + c}\mathbb{F}_{{\bf n} + c{\bf
1}} = J^{n_0}\mathbb{I}^{\mathbf{n}}\mathcal{I}_c$ for all
$n_0,{\bf n}.$ So
$$\frac{(F)_{n_0 +
c}\mathbb{F}_{{\bf n} + c{\bf 1}}M}{(F)_{n_0+1 +
c}\mathbb{F}_{{\bf n} + c{\bf 1}}M}
=\dfrac{J^{n_0}\mathbb{I}^{\mathbf{n}}\mathcal{I}_cM}{J^{n_0 +
1}\mathbb{I}^{\mathbf{n}}\mathcal{I}_cM},$$  and hence
$$B(n_0+c,{\bf n} + c{\bf 1}; F, \mathbf F; M)= B(n_0,{\bf n}; J, \mathbf I; \mathcal{I}_cM )$$
  for all $n_0,{\bf n},$ here $B(n_0,{\bf n}; J, \mathbf I; \mathcal{I}_cM ) =
\ell_A\Big(\dfrac{J^{n_0}\mathbb{I}^{\mathbf{n}}\mathcal{I}_cM}{J^{n_0
+ 1}\mathbb{I}^{\mathbf{n}}\mathcal{I}_cM}\Big).$
\end{note}

 Using Note \ref{no2.1}, we prove the
following proposition.

\begin{proposition}\label{pr1}
Let $F$ be an $\mathfrak{m}$-primary good filtration and
$\mathbf{F}$ be good filtrations such that $I= (F_1)_1 \cdots
(F_s)_1$ is not contained in $\sqrt{\mathrm{Ann}M}$.  Set $q=\dim
M/0_M:I^\infty$. Then $B(n_0,{\bf n}; F, \mathbf F; M )$ is a
polynomial of degree $q-1$ in $n_0, \bf n$ for all large  $n_0,
\bf n$.
\end{proposition}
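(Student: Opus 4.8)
The strategy is to use Note~\ref{no2.1} to descend to the ideal-adic situation (where the ambient multigraded ring becomes standard) and then to match the degree. Fix reductions $J,I_1,\ldots,I_s$ of $F,F_1,\ldots,F_s$ and let $c$ be the integer furnished by Note~\ref{no2.1}; put $N=\mathcal I_cM$ with $\mathcal I_c=(F)_c(F_1)_c\cdots(F_s)_c$. By Note~\ref{no2.1}, for all $n_0\geqslant c$ and all $\mathbf n\geqslant c\mathbf 1$,
$$B(n_0,\mathbf n;F,\mathbf F;M)=B(n_0-c,\mathbf n-c\mathbf 1;J,\mathbf I;N).$$
Since for $n_0\geqslant c$, $\mathbf n\geqslant c\mathbf 1$ the left-hand side is obtained from the right-hand side by the substitution $n_0\mapsto n_0-c$, $\mathbf n\mapsto\mathbf n-c\mathbf 1$ --- which preserves both the property of eventually agreeing with a polynomial and the total degree --- it suffices to show that $B(m_0,\mathbf m;J,\mathbf I;N)=\ell_A\bigl(J^{m_0}\mathbb I^{\mathbf m}N/J^{m_0+1}\mathbb I^{\mathbf m}N\bigr)$ agrees, for all large $m_0,\mathbf m$, with a polynomial of total degree $q-1$.

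For this ideal-adic statement I would invoke the existence theorem for mixed multiplicities of arbitrary ideals (see \cite{MV}; for $M=A$ it is already in \cite{Vi1}): if $\mathcal J:=I_1\cdots I_s$ is not contained in $\sqrt{\mathrm{Ann}N}$, then $B(m_0,\mathbf m;J,\mathbf I;N)$ agrees, for large $m_0,\mathbf m$, with a polynomial of total degree $\dim N/0_N:\mathcal J^\infty-1$. (This is also precisely the role of Note~\ref{no2.1}: in the ideal-adic case $\mathcal S$ is the associated graded ring $\mathrm{gr}_{J\frak R(\mathbf I;A)}\bigl(\frak R(\mathbf I;A)\bigr)$, which --- unlike $\mathcal S$ in general --- is a \emph{standard} $\mathbb N^{s+1}$-graded Noetherian ring over the Artinian local ring $A/J$, since $\frak R(\mathbf I;A)$ is standard $\mathbb N^{s}$-graded over $A$ and $J\frak R(\mathbf I;A)$ is generated by $J\subseteq A$; consequently $\mathcal M$ is a finitely generated graded module over it and the classical multigraded Hilbert polynomial theorem applies.) To invoke it I must check that $\mathcal J\not\subseteq\sqrt{\mathrm{Ann}N}$. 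From $(F_i)_{n+c}=I_i^{n}(F_i)_c$ (Note~\ref{no2.1}) we get $(F_i)_1^{n+c}\subseteq(F_i)_{n+c}\subseteq I_i^{n}$, hence $(F_i)_1\subseteq\sqrt{I_i}\subseteq\sqrt{(F_i)_1}$, so $\sqrt{I_i}=\sqrt{(F_i)_1}=\sqrt{(F_i)_c}$; and $(F)_c$ is $\frak m$-primary because $(F)_1^{c}\subseteq(F)_c\subseteq(F)_1$ with $(F)_1$ $\frak m$-primary. Therefore $\sqrt{\mathcal I_c}=\sqrt{I_1}\cap\cdots\cap\sqrt{I_s}=\sqrt{\mathcal J}=\sqrt I$, so $\sqrt{\mathcal J\mathcal I_c}=\sqrt I$; if some power of $\mathcal J$ annihilated $N=\mathcal I_cM$, then $\sqrt I\subseteq\sqrt{\mathrm{Ann}M}$, contradicting $I\not\subseteq\sqrt{\mathrm{Ann}M}$.

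It remains to prove $\dim N/0_N:\mathcal J^\infty=q=\dim M/0_M:I^\infty$. For a finitely generated $A$-module $L$ and an ideal $\frak a\not\subseteq\sqrt{\mathrm{Ann}L}$ one has $\dim L/0_L:\frak a^\infty=\max\{\dim A/\frak p:\frak p\in\mathrm{Supp}L,\ \frak a\not\subseteq\frak p\}$: the maximum is attained at a minimal prime of $\mathrm{Supp}(L/0_L:\frak a^\infty)$, which, being an associated prime of $L/0_L:\frak a^\infty$, cannot contain $\frak a$. Now $\mathcal J$ and $I$ have the same radical, so $\mathcal J\not\subseteq\frak p\iff I\not\subseteq\frak p$. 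If $\frak p\in\mathrm{Supp}M$ and $I\not\subseteq\frak p$, then $\frak p\neq\frak m$ (because $I\subseteq\frak m$), so $(F)_c\not\subseteq\frak p$; and $\sqrt I=\sqrt{I_1}\cap\cdots\cap\sqrt{I_s}\not\subseteq\frak p$ forces $\sqrt{(F_i)_c}=\sqrt{I_i}\not\subseteq\frak p$, hence $(F_i)_c\not\subseteq\frak p$, for every $i$; therefore $\mathcal I_c\not\subseteq\frak p$, so $N_{\frak p}=(\mathcal I_c)_{\frak p}M_{\frak p}=M_{\frak p}\neq0$ and $\frak p\in\mathrm{Supp}N$. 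Conversely $\mathrm{Supp}N\subseteq\mathrm{Supp}M$ since $N\subseteq M$. Hence $\{\frak p\in\mathrm{Supp}N:\mathcal J\not\subseteq\frak p\}=\{\frak p\in\mathrm{Supp}M:I\not\subseteq\frak p\}$, and comparing the maxima of $\dim A/\frak p$ over the two sides gives $\dim N/0_N:\mathcal J^\infty=q$. Together with the previous paragraph this shows that $B(n_0,\mathbf n;F,\mathbf F;M)$ agrees, for large $n_0,\mathbf n$, with a polynomial of total degree $q-1$.

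The main obstacle is the last paragraph: one must be sure that replacing $M$ by the submodule $N=\mathcal I_cM$ does not lower the ``$I$-relevant'' dimension. This rests on the two structural facts isolated above --- that a reduction $I_i$ of a good filtration $F_i$ has $\sqrt{I_i}=\sqrt{(F_i)_1}$, and that $(F)_c$ is $\frak m$-primary whenever $F$ is --- together with the (standard, but not purely formal) input that the mixed-multiplicity existence theorem for arbitrary ideals yields a polynomial of exactly the degree $\dim(L/0_L:\frak a^\infty)-1$.
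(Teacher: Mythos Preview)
Your proof is correct and follows essentially the same strategy as the paper: reduce via Note~\ref{no2.1} to the ideal-adic function $B(m_0,\mathbf m;J,\mathbf I;\mathcal I_cM)$, cite the known existence/degree result for mixed multiplicities of ideals, and then verify that $\dim(\mathcal I_cM/0_{\mathcal I_cM}:\mathcal J^\infty)=q$. The only noteworthy difference is in this last step: the paper argues via colon ideals, using $(0_M:I^\infty):\mathcal I_c=0_M:I^\infty$ to rewrite the annihilator of $\mathcal I_cM/0_{\mathcal I_cM}:I^\infty$ as $(0_M:I^\infty):M$, whereas you compare the sets $\{\frak p\in\mathrm{Supp}\,N:\mathcal J\not\subseteq\frak p\}$ and $\{\frak p\in\mathrm{Supp}\,M:I\not\subseteq\frak p\}$ directly. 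Both computations hinge on the same fact $\sqrt{\mathcal I_c}=\sqrt I$, so this is a stylistic rather than a substantive divergence; your version has the mild advantage of making explicit the verification $\mathcal J\not\subseteq\sqrt{\mathrm{Ann}\,\mathcal I_cM}$ needed to invoke the cited degree formula, which the paper leaves implicit.
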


\begin{proof}
Let $J,  I_1, \ldots ,I_s$ be reductions of $ F,  F_1, \ldots,
F_s$, respectively. Then it is easily to see that $0_M:
(I_1\cdots I_s)^\infty = 0_M: I^\infty$. By Note \ref{no2.1},
there exists a positive integer $c$ such that
$$B(n_0+c,{\bf n} + c{\bf 1}; F, \mathbf F; M)= B(n_0,{\bf n}; J, \mathbf I; \mathcal{I}_cM )$$
  for all $n_0,{\bf n}.$
 Since
$B(n_0,{\bf n}; J, \mathbf I; \mathcal{I}_cM )$ is a polynomial in
$n_0, \bf n$ for all large  $n_0, \bf n$ by \cite[Theorem
4.1]{HHRT} and this polynomial has degree: $\dim
\mathcal{I}_cM/0_{\mathcal{I}_cM}: I^\infty - 1$ by
\cite[Proposition 3.1 (i)]{Vi1} (see \cite{MV}), it is easily seen
that $B(n_0,{\bf n}; F, \mathbf F; M)$ is a polynomial of degree
$\dim \mathcal{I}_cM/0_{\mathcal{I}_cM}: I^\infty - 1$ for all
large $n_0, \bf n.$ Note that
  $(0_M: I^\infty) : \mathcal{I}_c = 0_M: I^\infty$.
Therefore
\begin{align*}
\dim \mathcal{I}_cM/0_{\mathcal{I}_cM}: I^\infty &= \dim
A/(0_M:{I}^\infty): \mathcal{I}_cM =
\dim A/(0_M: {I}^\infty: \mathcal{I}_c):M\\
& = \dim A/(0_M: I^\infty):M = \dim M/0_M: I^\infty = q.
\end{align*}
Hence $B(n_0,{\bf n}; F, \mathbf F; M)$ is a polynomial of degree
$q-1$ for all large $n_0, \bf n$.
\end{proof}

With assumptions as in Proposition \ref{pr1}, $B(n_0,{\bf n}; F,
\mathbf F; M)$ is a polynomial of degree $q-1$ for all large $n_0,
\bf n.$ Denote by $P(n_0,{\bf n}; F, \mathbf F; M)$ this
polynomial.   Write the terms of total degree $q-1$ in the
polynomial $P(n_0,{\bf n}; F, \mathbf F; M)$  in the form
$$\sum_{k_0 + |\mathbf{k}|=q-1}e(F^{[k_0 +
1]},\mathbf{F}^{[\mathbf{k}]}; M)
\frac{n_0^{k_0}\mathbf{n}^{\mathbf{k}}}{k_0!\mathbf{k}!},$$ then
it is easily seen that
 $e(F^{[k_0 + 1]},\mathbf{F}^{[\mathbf{k}]}; M)$ are
non-negative integers not all zero. We call $e(F^{[k_0 +
1]},\mathbf{F}^{[\mathbf{k}]}; M)$
 the {\it mixed multiplicity} of $M$ with respect to the good filtrations
 $ F, \mathbf{F}$ of the type $(k_0 + 1,\mathbf{k})$.

In the case that  $ F_i$ is an $I_i$-adic filtration for all $i =
1, \ldots, s$ and $F$ is a $J$-adic filtration,    where $J$ is an
$\frak m$-primary ideal and $I_1\cdots I_s$ is not contained in
$\sqrt{\mathrm{Ann}M}$, then $e(F^{[k_0 +
1]},\mathbf{F}^{[\mathbf{k}]}; M)$ is denoted by $e(J^{[k_0 +
1]},\mathbf{I}^{[\mathbf{k}]}; M)$ and called the {\it mixed
multiplicity} of $M$ with respect to the ideals $J, \mathbf{I}$ of
the type $(k_0 + 1,\mathbf{k})$ (see e.g. \cite{MV, Vi1, VD}).

Then the main result of this paper is the following theorem.

\begin{theorem}\label{thm2.3} Let $ F$ be an $\mathfrak{m}$-primary good
filtration and  $ \mathbf{F}$ be good filtrations of ideals in $A$
such that $I= (F_1)_1 \cdots (F_s)_1$ is not contained in
$\sqrt{\mathrm{Ann}M}$. Let $J,  I_1, \ldots, I_s$ be reductions of
$ F,  F_1, \ldots,  F_s$, respectively. Let $\frak I$ be an
$\mathfrak{m}$-primary ideal of $A$. Then the following statements
hold.
\begin{enumerate}[{\rm (i)}]
\item $e(F^{[k_0 + 1]},\mathbf{F}^{[\mathbf{k}]}; M) = e(J^{[k_0 +
1]},\mathbf{I}^{[\mathbf{k}]};  M)$. \item  Assume  that
$\mathrm{ht}\dfrac{I + \mathrm{Ann}M}{\mathrm{Ann}M}> 0.$ Then we
have
$$e((\frak I, \frak R(\mathrm{\bf F}; A)_+);\frak R(\mathbf{F};
M)) = e((\frak I, \frak R(\mathrm{\bf I}; A)_+); \frak
R(\mathbf{I}; M)).$$
\end{enumerate}
\end{theorem}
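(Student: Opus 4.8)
The plan is to transfer everything to the ideal case, where the analogous statements are known, using Note \ref{no2.1} and its consequences for the Rees modules, and then to absorb the resulting discrepancies by dimension arguments.

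For (i), by Note \ref{no2.1} there is an integer $c$ with $B(n_0+c,\mathbf n+c\mathbf 1;F,\mathbf F;M)=B(n_0,\mathbf n;J,\mathbf I;\mathcal I_cM)$ for all $n_0,\mathbf n$. By Proposition \ref{pr1} the left-hand side coincides, for large arguments, with a polynomial $P(n_0,\mathbf n;F,\mathbf F;M)$ of degree $q-1$; by \cite{HHRT} and \cite{Vi1} the right-hand side coincides with the Bhattacharya polynomial of $\mathcal I_cM$ relative to $J,\mathbf I$, which (as in the proof of Proposition \ref{pr1}) also has degree $q-1$. A translation of the variables by a constant vector does not change the homogeneous top-degree part of a polynomial, so the coefficients $e(F^{[k_0+1]},\mathbf F^{[\mathbf k]};M)$ equal the corresponding top-degree coefficients of that Bhattacharya polynomial, that is, $e(J^{[k_0+1]},\mathbf I^{[\mathbf k]};\mathcal I_cM)$. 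It then remains to see that replacing $M$ by $\mathcal I_cM$ does not affect the mixed multiplicities of top type. Here I would use that $(F)_c$ is $\mathfrak m$-primary and that, since $I_i$ is a reduction of $F_i$, $\sqrt{(F_i)_c}=\sqrt{(F_i)_1}\supseteq I$; hence every prime of $\operatorname{Supp}(M/\mathcal I_cM)$ contains $I$, so $(M/\mathcal I_cM):I^\infty=M/\mathcal I_cM$ and all its mixed multiplicities of a type $(k_0+1,\mathbf k)$ with $k_0+|\mathbf k|=q-1$ vanish. Since by Artin--Rees the top-degree coefficients of the Bhattacharya function are additive on short exact sequences, the sequence $0\to\mathcal I_cM\to M\to M/\mathcal I_cM\to 0$ gives $e(J^{[k_0+1]},\mathbf I^{[\mathbf k]};\mathcal I_cM)=e(J^{[k_0+1]},\mathbf I^{[\mathbf k]};M)$, which is (i).

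For (ii) I would first record a change-of-ring principle. Since the $I_i$ are reductions of the $F_i$, the $\mathbb N^{s}$-graded ring $\frak R(\mathbf F;A)$ is module-finite over $\frak R(\mathbf I;A)$, and $\frak R(\mathbf F;M)$ is a finitely generated module over both. If $R\subseteq S$ is module-finite, $R_0=S_0=A$, $\frak q$ is an ideal of $R$ with $R/\frak q$ artinian and $N$ a finitely generated graded $S$-module, then $(\frak qS)^tN=\frak q^tN$ inside $N$, and every finite-length graded $S$-module has the same length over $R$ as over $S$ (its graded composition factors are all the residue field $k$); hence the Hilbert--Samuel functions of $N$ with respect to $\frak qS$ over $S$ and to $\frak q$ over $R$ are literally equal, so $e(\frak qS;N)=e(\frak q;N)$. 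Applying this with $R=\frak R(\mathbf I;A)$, $S=\frak R(\mathbf F;A)$, $\frak q=(\frak I,\frak R(\mathbf I;A)_+)$ and $N=\frak R(\mathbf F;M)$, and then once more with $S=\frak R(\mathbf I;A)$, I reduce the right-hand side of the claim to a computation over $\frak R(\mathbf I;A)$. Using $\mathbb F_{\mathbf n}=\mathbb I^{\mathbf n-\mathbf c}\mathbb F_{\mathbf c}$ for large $\mathbf n$ (Note \ref{no2.1}) together with the hypothesis $\mathrm{ht}\frac{I+\mathrm{Ann}M}{\mathrm{Ann}M}>0$, the cokernel $\frak R(\mathbf F;M)/\frak R(\mathbf I;M)$ is, in high degrees, a subquotient of a twist of $\frak R(\mathbf I;\mathbb F_{\mathbf c}M/\mathbb I^{\mathbf c}M)$; as each prime of $\operatorname{Supp}(\mathbb F_{\mathbf c}M/\mathbb I^{\mathbf c}M)$ contains $I$, this cokernel has dimension at most $\dim(M/IM)+s<\dim M+s=\dim\frak R(\mathbf I;M)$, and additivity of multiplicity gives $e((\frak I,\frak R(\mathbf I;A)_+);\frak R(\mathbf F;M))=e((\frak I,\frak R(\mathbf I;A)_+);\frak R(\mathbf I;M))$.

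It then remains to compare $e((\frak I,\frak R(\mathbf F;A)_+);\frak R(\mathbf F;M))$ with $e((\frak I,\frak R(\mathbf I;A)_+)\frak R(\mathbf F;A);\frak R(\mathbf F;M))$, both over $\frak R(\mathbf F;A)$. Since $(\frak I,\frak R(\mathbf I;A)_+)\frak R(\mathbf F;A)\subseteq(\frak I,\frak R(\mathbf F;A)_+)$, it suffices to show the smaller ideal is a reduction of the larger one with respect to the module $\frak R(\mathbf F;M)$; after absorbing the $\frak I$-part the remaining point is that $\frak R(\mathbf I;A)_+\frak R(\mathbf F;A)$ is a reduction of $\frak R(\mathbf F;A)_+$, equivalently $\frak R(\mathbf F;A)_+\subseteq\overline{\frak R(\mathbf I;A)_+\frak R(\mathbf F;A)}$. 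This I would get from goodness: for a homogeneous $\xi\in\mathbb F_{\mathbf m}$ with $|\mathbf m|\geq 1$ and the ideal $E=\bigoplus_{\mathbf n\gg\mathbf 0}\mathbb F_{\mathbf n}$ of $\frak R(\mathbf F;A)$, one has $\xi\mathbb F_{\mathbf n}\subseteq\mathbb F_{\mathbf n+\mathbf m}=\mathbb I^{\mathbf m}\mathbb F_{\mathbf n}$ for $\mathbf n\gg\mathbf 0$, hence $\xi E\subseteq(\frak R(\mathbf I;A)_+\frak R(\mathbf F;A))E$, and the determinant trick (applied to $E$, or to a corresponding tail of $\frak R(\mathbf F;M)$) yields the required integral dependence and the reduction statement with respect to $\frak R(\mathbf F;M)$. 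Combining the three steps with part (i)'s method proves (ii). I expect this last step — controlling the change of grading ring and establishing the reduction statement for the irrelevant ideals — to be the main technical obstacle, together with the bookkeeping of dimensions that justifies discarding the various lower-dimensional cokernels, for which the height hypothesis $\mathrm{ht}\frac{I+\mathrm{Ann}M}{\mathrm{Ann}M}>0$ is exactly what is needed.
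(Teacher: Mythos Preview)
Your argument for (i) is essentially the paper's: both use Note~\ref{no2.1} to obtain $e(F^{[k_0+1]},\mathbf F^{[\mathbf k]};M)=e(J^{[k_0+1]},\mathbf I^{[\mathbf k]};\mathcal I_cM)$ and then remove the factor $\mathcal I_c$ by a short exact sequence and additivity. The only wrinkle is that you apply additivity directly to $0\to\mathcal I_cM\to M\to M/\mathcal I_cM\to 0$, whereas the paper first replaces $M$ by $\overline M=M/0_M{:}I^\infty$ (via \cite[Prop.~3.1(ii)]{MV}) before invoking additivity. The reason is formal: since $I\subseteq\sqrt{\mathrm{Ann}(M/\mathcal I_cM)}$, the mixed multiplicities of $M/\mathcal I_cM$ are not defined in the paper's convention, so the cited additivity result \cite[Cor.~3.9]{VT1} does not literally apply to your sequence; passing to $\overline M$ gives $\dim(\overline M/\mathcal I_c\overline M)<\dim\overline M$ and puts you squarely in the ``(ii)(a)'' case of that corollary.

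For (ii) your route is genuinely different from the paper's. The paper does not use the module-finite extension $\frak R(\mathbf I;A)\subseteq\frak R(\mathbf F;A)$, a change-of-ring principle for lengths, or an integral-dependence/reduction comparison of $\frak R(\mathbf I;A)_+\frak R(\mathbf F;A)$ with $\frak R(\mathbf F;A)_+$. Instead it observes the single identity
\[
I_c\,\frak R(\mathbf F;M)=\frak R(\mathbf I;I_cM)\quad\text{and}\quad \frak V^{\,n}\bigl(I_c\,\frak R(\mathbf F;M)\bigr)=\frak U^{\,n}\,\frak R(\mathbf I;I_cM)\ \ (n\ge 1),
\]
where $I_c=(F_1)_c\cdots(F_s)_c$, $\frak U=(\frak I,\frak R(\mathbf I;A)_+)$ and $\frak V=(\frak I,\frak R(\mathbf F;A)_+)$. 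From this the Hilbert--Samuel functions of $I_c\frak R(\mathbf F;M)$ with respect to $\frak V$ and of $\frak R(\mathbf I;I_cM)$ with respect to $\frak U$ are literally equal (a short composition-series argument shows the lengths over the two rings agree), so $e(\frak V;I_c\frak R(\mathbf F;M))=e(\frak U;\frak R(\mathbf I;I_cM))$. Two applications of additivity for the exact sequences $0\to I_cM\to M\to M/I_cM\to 0$ (on the $\mathbf I$-side, \cite[Cor.~3.9(ii)(b)]{VT1}) and $0\to I_c\frak R(\mathbf F;M)\to\frak R(\mathbf F;M)\to\frak R(\mathbf F;M)/I_c\frak R(\mathbf F;M)\to 0$ (ordinary Hilbert--Samuel additivity) finish the proof, using only that $\mathrm{ht}\bigl((I_c+\mathrm{Ann}M)/\mathrm{Ann}M\bigr)>0$.

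Your approach can be made to work, but it carries more baggage: you need to bound $\dim\bigl(\frak R(\mathbf F;M)/\frak R(\mathbf I;M)\bigr)$ (your sketch via $\mathbb I^{c\mathbf 1}$ annihilating the high-degree cokernel is correct, but the inequality $\dim\frak R(\mathbf I;N_0)\le\dim N_0+s$ and the low-degree bookkeeping should be stated), and the reduction of $\frak R(\mathbf F;A)_+$ to $\frak R(\mathbf I;A)_+\frak R(\mathbf F;A)$ is cleaner than your determinant-trick sketch if you simply note that any homogeneous $\xi\in\mathbb F_{\mathbf m}T^{\mathbf m}$ with $|\mathbf m|\ge 1$ is integral over $\frak R(\mathbf I;A)$ and then read off from the degrees that the coefficients of the integral equation lie in $(\frak R(\mathbf I;A)_+)^i$. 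The paper's identity avoids all three of these auxiliary steps at once.
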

\begin{proof}
The proof of (i): From Note \ref{no2.1},  there exists a positive
integer $c$ such that $$B(n_0+c,{\bf n} + c{\bf 1}; F, \mathbf F;
M)= B(n_0,{\bf n}; J, \mathbf I; \mathcal{I}_cM )$$
  for all $n_0,{\bf n},$ here
$\mathcal{I}_c = (F)_c(F_1)_c\cdots (F_s)_c$.  Hence by the
definition of the mixed multiplicities, we get  $$e(F^{[k_0 +
1]},\mathbf{F}^{[\mathbf{k}]}; M) = e(J^{[k_0 +
1]},\mathbf{I}^{[\mathbf{k}]};  \mathcal{I}_cM). \eqno(1)$$  Note
that $0_M: (I_1\cdots I_s)^\infty = 0_M: I^\infty$. Set
$$\overline{M} = M/0_M: I^\infty.$$  By \cite[Proposition 3.1
(ii)]{MV} (see \cite[Proposition 3.1 (ii)]{Vi1}), we have
$$e(J^{[k_0 + 1]},\mathbf{I}^{[\mathbf{k}]}; \mathcal{I}_cM)=
e(J^{[k_0 + 1]},\mathbf{I}^{[\mathbf{k}]};
\mathcal{I}_c\overline{M}). \eqno(2)$$  Consider the short exact
sequence of $A$-modules:
$$0\rightarrow \mathcal{I}_c\overline{M} \rightarrow \overline{M}
\rightarrow \overline{M}/\mathcal{I}_c\overline{M}\rightarrow 0.$$
Since  $\mathrm{Ann}\overline{M} : \mathcal{I}_c =
\mathrm{Ann}\overline{M}$, it follows that
$\mathrm{ht}\dfrac{\mathcal{I}_c +
\mathrm{Ann}\overline{M}}{\mathrm{Ann}\overline{M}} > 0$. Hence
$$\dim \overline{M}/\mathcal{I}_c\overline{M} < \dim \overline{M}.$$
 So by  the  additivity of mixed multiplicities \cite[Corollary 3.9 (ii)(a)]{VT1}, we get
$$e(J^{[k_0 + 1]},\mathbf{I}^{[\mathbf{k}]};  \mathcal{I}_c\overline{M})
= e(J^{[k_0 + 1]},\mathbf{I}^{[\mathbf{k}]};  \overline{M}).
\eqno(3)$$ Also by \cite[Proposition 3.1 (ii)]{MV} (see
\cite[Proposition 3.1 (ii)]{Vi1}, we obtain
$$e(J^{[k_0 + 1]},\mathbf{I}^{[\mathbf{k}]};  \overline{M}) = e(J^{[k_0 + 1]},\mathbf{I}^{[\mathbf{k}]};
 {M}). \eqno(4)$$
Therefore, by (1), (2), (3) and (4),  we have
$$e(F^{[k_0 + 1]},\mathbf{F}^{[\mathbf{k}]}; M) = e(J^{[k_0 + 1]},\mathbf{I}^{[\mathbf{k}]};  {M}).$$

The proof of (ii): Set $\frak U = (\frak I, \frak R(\mathrm{\bf
I}; A)_+)$ and $\frak V = (\frak I, \frak R(\mathrm{\bf F};
A)_+)$. At first as in Note \ref{no2.1}, there exists a positive
integer $c$ such that $\mathbb{F}_{{\bf n} + c{\bf 1}} =
\mathbb{I}^{\mathbf{n}}{I}_c$ for all ${\bf n}$,
 here $$I_c
= (F_1)_c \cdots (F_s)_c.$$ So  it can be verified that $I_c\frak
R(\mathbf{F}; M) = \frak R(\mathbf{I}; I_cM)$ and $\frak
V^nI_c\frak R(\mathbf{F}; M) = \frak U^n \frak R(\mathbf{I};
I_cM)$ for all $n \ge 1$. Hence $\dfrac{\frak R(\mathbf{I};
{I}_cM)}{\frak U^n \frak R(\mathbf{I}; {I}_cM)}=\dfrac{{I}_c\frak
R(\mathbf{F}; M) }{\frak V^n{I}_c\frak R(\mathbf{F}; M)}$ for all
$n \ge 1$.
 Now, for any $n \ge 1,$ assume that $\ell_{\frak R(\mathrm{\bf
F}; A)}\bigg(\dfrac{{I}_c\frak R(\mathbf{F}; M) }{\frak
V^n{I}_c\frak R(\mathbf{F}; M)}\bigg) = u.$ Then there exists a
composition series
 $$0 = {\cal M}_0
\subseteq {\cal M}_1 \subseteq {\cal M}_2 \subseteq\cdots
\subseteq {\cal M}_u=\dfrac{{I}_c\frak R(\mathbf{F}; M) }{\frak
V^n{I}_c\frak R(\mathbf{F}; M)} \eqno(5)$$ of the $\frak
R(\mathrm{\bf F}; A)$-module $\dfrac{{I}_c\frak R(\mathbf{F}; M)
}{\frak V^n{I}_c\frak R(\mathbf{F}; M)},$ i.e., ${\cal
M}_{i+1}/{\cal M}_i \cong \frak R(\mathbf{F}; A)/(\frak m, \frak
R(\mathrm{\bf F}; A)_+)$ for all $0 \le i \le u-1.$ Since $\frak
R(\mathbf{I}; A)/(\frak m, \frak R(\mathrm{\bf I}; A)_+) \cong
A/\frak m \cong \frak R(\mathbf{F}; A)/(\frak m, \frak
R(\mathrm{\bf F}; A)_+)$ and $\dfrac{\frak R(\mathbf{I};
{I}_cM)}{\frak U^n \frak R(\mathbf{I}; {I}_cM)}=\dfrac{{I}_c\frak
R(\mathbf{F}; M) }{\frak V^n{I}_c\frak R(\mathbf{F}; M)}$ is also
an $\frak R(\mathbf{I}; A)$-module, it follows that the
composition series $(5)$ is also a composition series of the
$\frak R(\mathrm{\bf I}; A)$-module $\dfrac{\frak R(\mathbf{I};
{I}_cM)}{\frak U^n \frak R(\mathbf{I}; {I}_cM)}.$
  Consequently $\ell_{\frak R(\mathrm{\bf
I}; A)}\bigg(\dfrac{\frak R(\mathbf{I}; {I}_cM)}{\frak U^n \frak
R(\mathbf{I}; {I}_cM)}\bigg) = u.$ From this it follows that
$$\ell_{\frak R(\mathrm{\bf
F}; A)}\bigg(\dfrac{{I}_c\frak R(\mathbf{F}; M) }{\frak
V^n{I}_c\frak R(\mathbf{F}; M)}\bigg) = \ell_{\frak R(\mathrm{\bf
I}; A)}\bigg(\dfrac{\frak R(\mathbf{I}; {I}_cM)}{\frak U^n \frak
R(\mathbf{I}; {I}_cM)}\bigg)$$ for all $n$. Therefore
$$e(\frak V; {I}_c\frak R(\mathbf{F}; M)) = e(\frak U; \frak
R(\mathbf{I}; {I}_cM)).\eqno(6)$$ By the assumption
$\mathrm{ht}\dfrac{I + \mathrm{Ann}M}{\mathrm{Ann}M}> 0,$ it
implies that $\mathrm{ht}\dfrac{{I}_c +
\mathrm{Ann}M}{\mathrm{Ann}M}> 0.$ So
 $$\dim M/{I}_cM < \dim M .$$  Hence from the short
exact sequence of $A$-modules: $0\rightarrow {I}_cM \rightarrow M
\rightarrow M/{I}_cM\rightarrow 0,$ we get
 $$e(\frak U; \frak R(\mathbf{I}; {I}_cM)) = e(\frak U; \frak R(\mathbf{I};
 M))\eqno(7)$$  by \cite[Corollary 3.9 (ii)(b)]{VT1} on the additivity
 of
 multiplicities of Rees modules.
Since $\mathrm{ht}\dfrac{I_c + \mathrm{Ann}M}{\mathrm{Ann}M}> 0,$
we have $ \dim \frak R(\mathbf{F}; M)/{I}_c\frak R(\mathbf{F}; M)<
\dim \frak R(\mathbf{F}; M).$ Therefore from the short exact
sequence of $\frak R(\mathrm{\bf F}; A)$-modules:
 $$0\rightarrow {I}_c\frak R(\mathbf{F}; M) \rightarrow \frak R(\mathbf{F}; M) \rightarrow
 \frak R(\mathbf{F}; M)/{I}_c\frak R(\mathbf{F}; M)\rightarrow 0,$$
 it follows that $$e(\frak V; {I}_c\frak R(\mathbf{F}; M)) = e(\frak V; \frak R(\mathbf{F};
 M)) \eqno(8)$$ (see e.g. \cite[Theorem
11.2.3]{HS}).
 Consequently, by (6), (7) and (8),  we obtain
 $$e(\frak V;\frak R(\mathbf{F}; M)) = e(\frak U; \frak R(\mathbf{I}; M)).$$
\end{proof}
  A version of Theorem \ref{thm2.3} (i) for the case of reductions of ideals in Noetherian local rings
   was proved by Viet in \cite[Theorem 4.1]{Vi3} by a different approach.
   Theorem \ref{thm2.3} is an important key which help us to obtain the following
   results for mixed multiplicities and the multiplicity  of Rees modules
   by short arguments.

Let $I$ be an ideal of $A$ and $t$ a variable over $A,$ and $\frak
R(I; A) = A[It]$ the Rees algebra of $I.$ Denote by
$\overline{\frak R(I; A)}$ the integral closure of $A[It]$ in the
polynomial ring $A[t].$ Then $\overline{\frak R(I;
A)}=\bigoplus_{n \ge 0}\overline{I^n}t^n$ (see \cite[Proposition
5.2.1]{HS}). Moreover, if $A$ is an analytically unramified ring,
then $\{\overline{I^n}\}_{n\geqslant 0}$ is an $I$-good filtration
(see \cite{R2}). In this case, $I$ is a reduction of
$\{\overline{I^n}\}_{n\geqslant 0}.$ Hence if we assume further
that $\mathrm{ht}I>0$, then $e(\overline{\frak R(I; A)}) =e(\frak
R(I;A))$ by Theorem \ref{thm2.3} (ii). We get the following
result.
\begin{corollary}\label{co2.7} Let $A$ be an analytically unramified ring and $I$ an
ideal of positive height of $A.$  Then $e(\overline{\frak R(I;
A)}) =e(\frak R(I; A)).$
\end{corollary}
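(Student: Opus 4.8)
The plan is to deduce Corollary~\ref{co2.7} from Theorem~\ref{thm2.3}(ii), specialized to a single filtration ($s = 1$) and to the module $M = A$. I would begin from the two facts recalled just before the statement: since $A$ is analytically unramified, $F_1 := \{\overline{I^n}\}_{n\geqslant 0}$ is an $I$-good filtration of ideals in $A$, so $I_1 := I$ is a reduction of $F_1$; and $\overline{\frak R(I;A)} = \bigoplus_{n\geqslant 0}\overline{I^n}t^n$. Degenerating the multigraded notation of Section~2 to $s = 1$, these say precisely that $\frak R(\mathbf F; A) = \overline{\frak R(I;A)}$ and $\frak R(\mathbf I; A) = A[It] = \frak R(I;A)$, and, since $M = A$, that $\frak R(\mathbf F; M) = \overline{\frak R(I;A)}$ and $\frak R(\mathbf I; M) = \frak R(I;A)$; likewise $\frak R(\mathbf F;A)_+ = \overline{\frak R(I;A)}_+$ and $\frak R(\mathbf I;A)_+ = \frak R(I;A)_+$.

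Next I would check that the running hypotheses of Theorem~\ref{thm2.3} are in force here. In the theorem's notation the role of $I$ is played by $(F_1)_1 = \overline I$ and that of $\mathbf I$ by $I_1 = I$; since $\sqrt{\overline I} = \sqrt I$ and $\mathrm{ht}\,I > 0$, we have $\overline I \not\subseteq \sqrt 0 = \sqrt{\mathrm{Ann}A}$ and $\mathrm{ht}\dfrac{\overline I + \mathrm{Ann}A}{\mathrm{Ann}A} = \mathrm{ht}\,\overline I = \mathrm{ht}\,I > 0$, so the extra hypothesis of part~(ii) holds. Theorem~\ref{thm2.3} also presupposes an ambient $\mathfrak m$-primary good filtration $F$ with a reduction $J$ and an $\mathfrak m$-primary ideal $\frak I$; I would take $F$ to be the $\mathfrak m$-adic filtration with $J = \mathfrak m$, and $\frak I = \mathfrak m$. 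Theorem~\ref{thm2.3}(ii) then gives
$$e\big((\mathfrak m,\ \overline{\frak R(I;A)}_+);\ \overline{\frak R(I;A)}\big) \;=\; e\big((\mathfrak m,\ \frak R(I;A)_+);\ \frak R(I;A)\big).$$

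Finally I would identify these two multiplicities with the quantities in the corollary: for a graded algebra $B$ with $B_0 = A$ of the Rees type occurring here, $e(B)$ denotes the Hilbert--Samuel multiplicity of $B$ at its unique graded maximal ideal $(\mathfrak m, B_+)$, so the left- and right-hand sides above are exactly $e(\overline{\frak R(I;A)})$ and $e(\frak R(I;A))$. I do not anticipate a real difficulty: the argument is a pure specialization of Theorem~\ref{thm2.3}, and the only care needed is bookkeeping --- confirming that every hypothesis of Theorem~\ref{thm2.3} (the ambient $\mathfrak m$-primary good filtration, the non-nilpotence of $\overline I$, the positive-height condition) survives the reduction to $s = 1$ and $M = A$, and that the symbol $e(-)$ in the corollary means the same multiplicity (with respect to the graded maximal ideal) as in Theorem~\ref{thm2.3}(ii).
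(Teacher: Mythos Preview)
Your proposal is correct and follows essentially the same route as the paper: specialize Theorem~\ref{thm2.3}(ii) to $s=1$, $M=A$, with $F_1=\{\overline{I^n}\}_{n\ge 0}$ (an $I$-good filtration since $A$ is analytically unramified) and $I_1=I$, using $\mathrm{ht}\,I>0$ to secure the height hypothesis. The paper's argument is exactly this, only stated more tersely; your extra bookkeeping (the ambient $F,J$, the identification of $e(-)$ with the multiplicity at the graded maximal ideal) is correct but not strictly needed beyond what the paper writes.
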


Let $\frak I_1$ be a sequence consisting $k_1$ elements of
$(F_1)_1,\ldots,$ $\frak I_s$ be a sequence consisting $k_s$
elements of $(F_s)_1$ with $k_1,\ldots,k_s \ge 0.$
 Put   $(\emptyset) = 0_A$ and $\frak R = \frak I_1,\ldots,\frak I_s.$
 For any $1\le i\le s,$ set
$\varepsilon_i = (1, \ldots, 1, 0, 1, \ldots, 1) \in \mathbb{N}^s$
(the $i$th coordinate is 0). Then $\frak R$ is called a {\it joint
reduction} of filtrations $\mathbf{F}$ with respect to $M$ of the
type $\mathbf k = (k_1,\ldots,k_s)$ if $\mathbb{F}_{\mathbf{n} +
\mathbf{1}}M = \sum_{i=1}^s(\frak I_i) \mathbb{F}_{\mathbf{n} +
\mathbf{\varepsilon}_i}M$ for all large $\bf n.$ Recall that the
concept of joint reductions of $\mathfrak{m}$-primary ideals  was
given by Rees in 1984 \cite{Re}. This concept was extended to the
set of arbitrary ideals by \cite{Oc, Vi2,  Vi3,  VDT, {VT4}}.

We obtain the following corollary on expressing  mixed
multiplicities of modules with respect to filtrations in terms of
the multiplicity of their joint reductions, which is an extension
of \cite[Theorem 3.1]{VDT} and \cite[Theorem 2.4]{Re}.

\begin{corollary}\label{co2.8}
 Let $ F $ be an $\mathfrak{m}$-primary good filtration and  $\mathbf{F}$ be good filtrations in $A$.
 Set  $\dim M = d$ and $I= (F_1)_1 \cdots (F_s)_1.$ Assume  that $\mathrm{ht}
 \Big(\dfrac{I+\mathrm{Ann}_AM}{\mathrm{Ann}_AM}\Big) = h >0$ and $k_0 \in \mathbb{N}$,
${\bf k} = (k_1,\ldots,k_s) \in \mathbb{N}^s$
  such that $k_0+ |\mathbf{k}| = d-1$ and $|\mathbf{k}| < h.$
Let $\frak R = \frak I_0,\frak I_1,\ldots,\frak I_s$ be a joint
reduction of
 $F, \mathbf{F}$ with respect to $M$ of the type $(k_0+1, {\bf k})$ such that
$\frak R$ is a system of parameters for $M.$ Then
$$e_A(F^{[k_0+1]}, \mathbf{F}^{[\mathbf{k}]}; M) = e_A(\frak R;
M).$$
\end{corollary}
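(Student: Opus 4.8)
The plan is to reduce Corollary \ref{co2.8} to the already-known ideal-theoretic case via Theorem \ref{thm2.3}(i). By Theorem \ref{thm2.3}(i), if $J, I_1, \ldots, I_s$ are reductions of $F, F_1, \ldots, F_s$ respectively, then $e_A(F^{[k_0+1]}, \mathbf{F}^{[\mathbf{k}]}; M) = e_A(J^{[k_0+1]}, \mathbf{I}^{[\mathbf{k}]}; M)$. So it suffices to produce reductions whose joint reduction of the prescribed type coincides with the given sequence $\frak R$, and then invoke the ideal case, namely \cite[Theorem 3.1]{VDT}, which asserts that for a joint reduction of ideals that forms a system of parameters, the mixed multiplicity equals the Samuel multiplicity $e_A(\frak R; M)$.

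First I would observe that the elements making up $\frak R = \frak I_0, \frak I_1, \ldots, \frak I_s$ lie in $(F)_1, (F_1)_1, \ldots, (F_s)_1$ by hypothesis. The key point is that $(F)_1$ is a reduction of $F$ and $(F_i)_1$ is a reduction of $F_i$ for each $i$ (this is stated in Section 2: a good filtration is always an $(F)_1$-good filtration, and $(F)_1$ is then a reduction). So I may simply take $J = (F)_1$ and $I_i = (F_i)_1$. Next I would verify that the joint-reduction condition $\mathbb{F}_{\mathbf{n}+\mathbf{1}}M = \sum_i (\frak I_i)\mathbb{F}_{\mathbf{n}+\varepsilon_i}M$ for large $\bf n$, together with the good-filtration stabilization $(F)_{n+1} = (F)_1(F)_n$ and $(F_i)_{n+1} = (F_i)_1(F_i)_n$ for large $n$, forces $\frak R$ to be a joint reduction of the $J$-adic and $I_i$-adic filtrations of the same type $(k_0+1, \mathbf{k})$. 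Concretely, for large $\bf n$ one has $\mathbb{F}_{\mathbf{n}+\mathbf{1}} = J^{n_0} I_1^{n_1}\cdots I_s^{n_s}\cdot\mathbb{F}_{\mathbf{c}}$ for a fixed shift $\mathbf{c}$ (as in Note \ref{no2.1}), so the identity among the $\mathbb{F}$'s transfers to the identity $J^{n_0+1}\mathbb{I}^{\mathbf{n}}M = \sum_i(\frak I_i)\,(\text{shifted power})\,M$, which is exactly the defining condition for $\frak R$ to be a joint reduction of $J, \mathbf{I}$ of type $(k_0+1,\mathbf{k})$ with respect to $M$.

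Then, since $\frak R$ is by hypothesis a system of parameters for $M$ with $\dim M = d$ and $k_0 + |\mathbf{k}| = d-1$, and since $|\mathbf{k}| < h = \mathrm{ht}\big((I + \mathrm{Ann}_AM)/\mathrm{Ann}_AM\big)$, the hypotheses of the ideal-theoretic result \cite[Theorem 3.1]{VDT} (an extension of Rees's theorem \cite[Theorem 2.4]{Re}) are met for $J, \mathbf{I}$. Applying it gives $e_A(J^{[k_0+1]}, \mathbf{I}^{[\mathbf{k}]}; M) = e_A(\frak R; M)$. Combining with Theorem \ref{thm2.3}(i) yields $e_A(F^{[k_0+1]}, \mathbf{F}^{[\mathbf{k}]}; M) = e_A(J^{[k_0+1]}, \mathbf{I}^{[\mathbf{k}]}; M) = e_A(\frak R; M)$, which is the claim.

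I expect the main obstacle to be the bookkeeping in the second step: carefully checking that the joint-reduction relation for the filtrations $\mathbf{F}$ descends to the corresponding relation for the ideal-adic filtrations of $(F)_1, (F_i)_1$, paying attention to which coordinates get shifted by the stabilization constant $c$ and confirming the equality holds for \emph{all} large $\bf n$ in the adic sense (not just along the particular shift). One must also make sure the hypothesis on heights is genuinely inherited — but since $I = (F_1)_1\cdots(F_s)_1$ is exactly $I_1\cdots I_s$ in this choice of reductions, the height condition carries over verbatim. Everything else is a direct citation of Theorem \ref{thm2.3}(i) and the ideal-theoretic joint-reduction formula.
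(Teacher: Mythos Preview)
Your overall strategy---reduce to the adic case via the choice $J=(F)_1$, $I_i=(F_i)_1$ and then invoke \cite[Theorem 3.1]{VDT}---matches the paper's. However, there is a genuine gap in your second step. When you substitute the stabilization $(F)_{n_0+c}=J^{n_0}(F)_c$ and $(F_i)_{n_i+c}=I_i^{n_i}(F_i)_c$ into the filtration joint-reduction identity, the factor $\mathbb{F}_{\mathbf{c}}=\mathcal{I}_c=(F)_c(F_1)_c\cdots(F_s)_c$ does \emph{not} disappear: what you actually obtain is
\[
J^{m_0+1}\mathbb{I}^{\mathbf{m}+\mathbf{1}}\,\mathcal{I}_cM \;=\; \sum_{i=0}^{s}(\frak I_i)\,J^{m_0+u_i}\mathbb{I}^{\mathbf{m}+\varepsilon_i}\,\mathcal{I}_cM
\]
for all large $m_0,\mathbf{m}$. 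In other words, $\frak R$ is a joint reduction of $J,\mathbf{I}$ of type $(k_0+1,\mathbf{k})$ with respect to $\mathcal{I}_cM$, not with respect to $M$. There is no evident way to cancel $\mathcal{I}_c$ and conclude the same identity for $M$; one only has the sandwich $J^{c}\mathbb{I}^{c\mathbf{1}}\subseteq\mathcal{I}_c\subseteq J\mathbb{I}^{\mathbf{1}}$, which gives inclusions rather than equalities. Consequently the hypothesis of \cite[Theorem 3.1]{VDT} is verified for $\mathcal{I}_cM$, not for $M$, and your final citation does not apply as stated.

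The paper closes exactly this gap. It checks that $\dim\mathcal{I}_cM=\dim M$ and $\mathrm{ht}\big((I+\mathrm{Ann}\,\mathcal{I}_cM)/\mathrm{Ann}\,\mathcal{I}_cM\big)=h$, applies \cite[Theorem 3.1]{VDT} to $\mathcal{I}_cM$ to get $e(J^{[k_0+1]},\mathbf{I}^{[\mathbf{k}]};\mathcal{I}_cM)=e_A(\frak R;\mathcal{I}_cM)$, and then uses equation~(1) from the proof of Theorem~\ref{thm2.3}(i) (namely $e(F^{[k_0+1]},\mathbf{F}^{[\mathbf{k}]};M)=e(J^{[k_0+1]},\mathbf{I}^{[\mathbf{k}]};\mathcal{I}_cM)$) together with additivity of the Samuel multiplicity on the exact sequence $0\to\mathcal{I}_cM\to M\to M/\mathcal{I}_cM\to 0$ (where $\dim M/\mathcal{I}_cM<\dim M$ since $\mathrm{ht}\big((\mathcal{I}_c+\mathrm{Ann}M)/\mathrm{Ann}M\big)>0$) to conclude $e_A(\frak R;\mathcal{I}_cM)=e_A(\frak R;M)$. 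Your argument can be repaired along the same lines, but the passage through $\mathcal{I}_cM$ and the final additivity step are essential and are precisely what your sketch omits.
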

\begin{proof}
For each $i = 1, \ldots, s$, set $I_i = (F_i)_1$ and $I_0 =
(F)_1$. Since $I_0, I_1, \ldots, I_s$ are reductions of $F,
\mathbf{F}$, there exists a large enough positive integer $c$ such
that
$$(F)_{n_0+c}\mathbb{F}_{\mathbf{n} + c\mathbf{1}} = I_0^{n_0}\mathbb{I}^{\mathbf{n}}\mathcal{I}_c$$
for all $n_0, \mathbf{n}$, here $\mathcal{I}_c =
(F)_c(F_1)_c\cdots (F_s)_c$ (see Note \ref{no2.1}).
 Then by the assumption
that $\frak R$ is a joint reduction of
 $F, \mathbf{F}$ with respect to $M$, we have
 \begin{align*}
 I_0^{n_0+1}\mathbb{I}^{\mathbf{n} + \mathbf{1}}\mathcal{I}_cM &= (F)_{n_0+c+1}\mathbb{F}_{\mathbf{n} + (c+1)
 \mathbf{1}}M \\
 & =  \sum_{i=0}^s(\frak I_i)
(F)_{n_0+c+u_i}\mathbb{F}_{\mathbf{n} + c\mathbf{1} +
\varepsilon_i}M = \sum_{i=0}^s(\frak I_i)
I_0^{n_0+u_i}\mathbb{I}^{\mathbf{n} +
\mathbf{\varepsilon}_i}\mathcal{I}_cM
\end{align*}
for all large $n_0, \bf n$, here $\varepsilon_0 = \mathbf 1 \in
\mathbb{N}^s,$ $\varepsilon_i = (1, \ldots, 1, 0, 1, \ldots, 1)
\in \mathbb{N}^s$ (the $i$th coordinate is 0) and $u_0 = 0,$ $u_i
= 1$ for all $1 \le i \le s.$ Thus
$$I_0^{n_0+1}\mathbb{I}^{\mathbf{n} + \mathbf{1}}\mathcal{I}_cM =
\sum_{i=0}^s(\frak I_i) I_0^{n_0+u_i}\mathbb{I}^{\mathbf{n} +
\mathbf{\varepsilon}_i}\mathcal{I}_cM$$ for all large $n_0, \bf
n.$ Hence $\frak R$ is a joint reduction of
 $I_0, \mathbf{I}$ with respect to $\mathcal{I}_cM$. Recall that
$I= (F_1)_1 \cdots (F_s)_1,$  $\mathcal{I}_c = (F)_c(F_1)_c\cdots
(F_s)_c,$  $(F)_c$ is $\mathfrak{m}$-primary, and
  $\mathrm{ht}\dfrac{I + \mathrm{Ann}M}{\mathrm{Ann}M}=h> 0.$
  Hence $\dim M = \dim \mathcal{I}_cM$ and $\mathrm{ht}\dfrac{I + \mathrm{Ann}\mathcal{I}_cM}{\mathrm{Ann}\mathcal{I}_cM} =h.$
 So by \cite[Theorem 3.1]{VDT}, we get
 $e(I_0^{[k_0 + 1]},\mathbf{I}^{[\mathbf{k}]}; \mathcal{I}_cM) = e_A(\frak R; \mathcal{I}_cM).$
Moreover, by $(1)$ in the proof of Theorem \ref{thm2.3} (i), we
have
 $e(F^{[k_0 + 1]},\mathbf{F}^{[\mathbf{k}]}; M) = e(I_0^{[k_0 + 1]},\mathbf{I}^{[\mathbf{k}]};  \mathcal{I}_cM).$
 Therefore
 $$e(F^{[k_0 + 1]},\mathbf{F}^{[\mathbf{k}]}; M) = e_A(\frak R; \mathcal{I}_cM).$$
 Since $\mathrm{ht}\dfrac{I + \mathrm{Ann}M}{\mathrm{Ann}M}>
0$, $\mathrm{ht}\dfrac{\mathcal{I}_c +
\mathrm{Ann}M}{\mathrm{Ann}M}> 0.$ So $\dim M/\mathcal{I}_cM <
\dim M.$ Hence from the short exact sequence of $A$-modules:
$0\rightarrow \mathcal{I}_cM \rightarrow M \rightarrow
M/\mathcal{I}_cM\rightarrow 0,$ it follows that $e_A(\frak R;
\mathcal{I}_cM) = e_A(\frak R; M)$ (see e.g. \cite[Theorem
11.2.3]{HS}).  Consequently we obtain
 $e(F^{[k_0 + 1]},\mathbf{F}^{[\mathbf{k}]}; M) = e_A(\frak R; M).$
\end{proof}

\vspace{12pt}
\section{On some formulas for  multiplicities
 } \vskip 0.2cm
 \noindent
Continuing to apply the main theorem (Theorem \ref{thm2.3}), in
this section, we give some formulas for transforming  mixed
multiplicities and multiplicities of Rees modules of good
filtrations.

Keep the notations in Theorem \ref{thm2.3}. Let $J$ be a reduction
of $F$. We choose the reductions $I_1=(F_1)_1, \ldots,
I_s=(F_s)_1$ of filtrations $F_1, \ldots, F_s$, respectively.  Set
$\dim M = d$ and $I= (F_1)_1 \cdots (F_s)_1.$
  If $\mathrm{ht}\dfrac{I +
\mathrm{Ann}M}{\mathrm{Ann}M}> 0$, then  by \cite[Theorem
4.4]{HHRT} which is a generalized version of \cite[Theorem
1.4]{Ve} (see \cite{VT1}), we have
$$
e\big(\big(J,\mathfrak R(\mathrm{\bf I}; A)_+\big); \mathfrak
R(\mathrm{\bf I}; M)\big)= \sum_{k_0\:+\:\mid\mathrm{\bf k}\mid
=\;d-1}e\big(J^{[k_0+1]},\mathrm{\bf I}^{[\mathrm{\bf
k}]};M\big).$$ On one hand, by Theorem \ref{thm2.3} (ii), we get
$$e\big(\big(J,\mathfrak R(\mathrm{\bf F}; A)_+\big); \mathfrak R(\mathrm{\bf F}; M)\big)
=e\big(\big(J,\mathfrak R(\mathrm{\bf I}; A)_+\big); \mathfrak
R(\mathrm{\bf I}; M)\big).$$ On the other hand, by Theorem
\ref{thm2.3} (i), it follows that
$$\sum_{k_0\:+\:|\mathrm{\bf k}|
=\;d-1}e\big(F^{[k_0+1]},\mathrm{\bf F}^{[\mathrm{\bf k}]};M\big)=
\sum_{k_0\:+\:\mid\mathrm{\bf k}\mid
=\;d-1}e\big(J^{[k_0+1]},\mathrm{\bf I}^{[\mathrm{\bf
k}]};M\big).$$ Hence we obtain the following result.

\begin{corollary}\label{co2.4} Let $ F$ be an $\mathfrak{m}$-primary good filtration and  $\mathbf{F}$ be good
filtrations. Let $J$ be a reduction of $F$. Set $\dim M = d$  and
$I= (F_1)_1 \cdots (F_s)_1.$ Assume that $\mathrm{ht}\dfrac{I +
\mathrm{Ann}M}{\mathrm{Ann}M}> 0$. Then
$$e\big(\big(J,\mathfrak R(\mathrm{\bf F}; A)_+\big);
\mathfrak R(\mathrm{\bf F}; M)\big)= \sum_{k_0\:+\:|\mathrm{\bf
k}| =\;d-1}e\big(F^{[k_0+1]},\mathrm{\bf F}^{[\mathrm{\bf
k}]};M\big).$$
\end{corollary}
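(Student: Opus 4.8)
The plan is to assemble Corollary \ref{co2.4} directly from the three ingredients already in hand, exactly as the paragraph preceding the statement suggests, so the proof is a short synthesis rather than a fresh argument. First I would fix the reductions: choose $J$ to be the given reduction of $F$ and take $I_i = (F_i)_1$, which is a reduction of the good filtration $F_i$ for each $i = 1, \ldots, s$; note that $I = I_1 \cdots I_s = (F_1)_1 \cdots (F_s)_1$, so the hypothesis $\mathrm{ht}\dfrac{I + \mathrm{Ann}M}{\mathrm{Ann}M} > 0$ applies verbatim to these ideals. In particular, since $\mathrm{ht}\dfrac{I + \mathrm{Ann}M}{\mathrm{Ann}M} > 0$ forces $I \not\subseteq \sqrt{\mathrm{Ann}M}$ and $q = \dim M/0_M{:}I^\infty = \dim M = d$, all the mixed multiplicities $e(F^{[k_0+1]}, \mathbf{F}^{[\mathbf{k}]}; M)$ and $e(J^{[k_0+1]}, \mathbf{I}^{[\mathbf{k}]}; M)$ with $k_0 + |\mathbf{k}| = d-1$ are well-defined.

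Next I would invoke the generalized Verma-type formula of \cite[Theorem 4.4]{HHRT} for the ideal case, which gives
$$e\big(\big(J, \mathfrak R(\mathbf{I}; A)_+\big); \mathfrak R(\mathbf{I}; M)\big) = \sum_{k_0 + |\mathbf{k}| = d-1} e\big(J^{[k_0+1]}, \mathbf{I}^{[\mathbf{k}]}; M\big).$$
Then Theorem \ref{thm2.3} (ii), applied with $\mathfrak I$ replaced by $J$ (which is $\mathfrak m$-primary, being a reduction of the $\mathfrak m$-primary filtration $F$, hence $\sqrt{J} = \mathfrak m$), yields
$$e\big(\big(J, \mathfrak R(\mathbf{F}; A)_+\big); \mathfrak R(\mathbf{F}; M)\big) = e\big(\big(J, \mathfrak R(\mathbf{I}; A)_+\big); \mathfrak R(\mathbf{I}; M)\big).$$
Finally, Theorem \ref{thm2.3} (i) applied term-by-term gives $e(F^{[k_0+1]}, \mathbf{F}^{[\mathbf{k}]}; M) = e(J^{[k_0+1]}, \mathbf{I}^{[\mathbf{k}]}; M)$ for each multi-index, so summing over $k_0 + |\mathbf{k}| = d-1$ and chaining the three displayed equalities produces the asserted identity.

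The only point requiring care — and the place I expect to spend a sentence justifying — is the applicability of Theorem \ref{thm2.3} (ii): that result requires an $\mathfrak m$-primary ideal in the role of $\mathfrak I$, and one must observe that a reduction $J$ of the $\mathfrak m$-primary good filtration $F$ is itself $\mathfrak m$-primary, since $J \subseteq (F)_1$ and $(F)_{n+1} = J(F)_n$ for large $n$ forces $\sqrt{J} = \sqrt{(F)_1} = \mathfrak m$. Everything else is bookkeeping: checking that the height hypothesis transfers to the ideals $\mathbf{I}$ (immediate, since $I$ is literally the same ideal), and that $\dim M = d$ matches the degree of the relevant polynomial so that the sum ranges over the correct set of multi-indices. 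No genuinely new estimate is needed; the corollary is a formal consequence of the main theorem together with the known ideal-theoretic formula \cite[Theorem 4.4]{HHRT}.
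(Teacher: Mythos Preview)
Your proposal is correct and follows essentially the same route as the paper's own argument: choose $I_i=(F_i)_1$, apply \cite[Theorem 4.4]{HHRT} to the ideals $J,\mathbf I$, then use Theorem \ref{thm2.3}(ii) with $\mathfrak I=J$ on the left and Theorem \ref{thm2.3}(i) term-by-term on the right, and chain. Your added checks that $J$ is $\mathfrak m$-primary and that $q=\dim M=d$ are the only places where you say more than the paper does, and they are harmless clarifications.
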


 The next corollary is the  additivity on exact sequences of mixed multiplicities and
the multiplicity of Rees modules of filtrations. Let $ W_1, W_2,
W_3$ be finitely generated  $A$-modules and $ 0\longrightarrow W_1
\longrightarrow W_3 \longrightarrow W_2\longrightarrow 0 $ be a
short exact sequence of $A$-modules. For any $i = 1, 2, 3,$ set
$\overline{W}_i= \dfrac{W_i}{0_{W_i}: I^\infty}$. Suppose that $I=
(F_1)_1 \cdots (F_s)_1$ is not contained in
$\sqrt{\mathrm{Ann}{W_i}}$ \;for all $i = 1, 2, 3.$

From Theorem \ref{thm2.3} and \cite[Corollary 3.9]{VT1} and
Corollary \ref{co2.4}, we get the following result on the
additivity on exact sequences of multiplicities of filtrations.
\begin{corollary}\label{co2.5} Keep the above notations.
Let $\frak I$ be an $\frak m$-primary ideal of $A$. Set $\frak J =
(\frak I, \mathfrak R(\mathrm{\bf F}; A)_+)$.
    Assume that $k_0 \in \mathbb{N}$,
${\bf k} \in \mathbb{N}^s$
  such that $k_0+ 1+ |\mathbf{k}| = \dim \overline{W}_3.$  Then the following statements hold.
 \begin{itemize}
 \item[$\mathrm{(i)}$] If $\dim  \overline{W}_1 =\dim \overline{W}_2=\dim \overline{W}_3$,
 then
\begin{align*}
(a )&: e(F^{[k_0+1]},\mathrm{\bf F}^{[\mathrm{\bf k}]}; W_3)=
e(F^{[k_0+1]},\mathrm{\bf F}^{[\mathrm{\bf k}]}; W_1)+
e(F^{[k_0+1]},\mathrm{\bf F}^{[\mathrm{\bf k}]}; W_2).\\
(b)&: \text{If } \mathrm{ht}\dfrac{I +
\mathrm{Ann}W_i}{\mathrm{Ann}W_i} >0
\text{ for all } i = 1, 2, 3, \text{ then}\\
&\quad\quad e\big(\frak J; \mathfrak R(\mathrm{\bf F}; {W}_3)\big)
= e\big(\frak J; \mathfrak R(\mathrm{\bf F}; {W}_1)\big)+
e\big(\frak J; \mathfrak R(\mathrm{\bf F}; {W}_2)\big).
\end{align*}

\item[$\mathrm{(ii)}$]  If $h\ne k = 1,2$ and $\dim \overline{W}_3
> \dim \overline{W}_h$, then
\begin{align*}
(a)&: e(F^{[k_0+1]},\mathrm{\bf F}^{[\mathrm{\bf k}]}; W_3)=
e(F^{[k_0+1]},\mathrm{\bf F}^{[\mathrm{\bf k}]}; W_k).\;\;\;\;\;\;\;\;\;\;\;\;\;\;\;\;\;\;\;\;\;\;\;\;\;\;\;\;\;\;\;\\
(b)&: \text{If } \mathrm{ht}\dfrac{I + \mathrm{Ann}W_i}{\mathrm{Ann}W_i} >0 \text{ for all } i = 1, 2, 3, \text{ then}\\
&\quad\quad \quad\quad e\big(\frak J; \mathfrak R(\mathrm{\bf F};
{W}_3)\big) = e\big(\frak J; \mathfrak R(\mathrm{\bf F};
{W}_k)\big).
\end{align*}
\end{itemize}
\end{corollary}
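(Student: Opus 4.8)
\textbf{Proof proposal for Corollary \ref{co2.5}.}

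The plan is to deduce everything from Theorem \ref{thm2.3} together with the already-cited additivity results \cite[Corollary 3.9]{VT1} (for ideals) and Corollary \ref{co2.4}, by transporting the statement for the good filtrations $F,\mathbf F$ to a statement about the $J$-adic and $I_i$-adic filtrations, where $J$ is a reduction of $F$ and $I_i=(F_i)_1$ is a reduction of $F_i$. First I would fix these reductions and invoke Theorem \ref{thm2.3}(i) to write $e(F^{[k_0+1]},\mathbf F^{[\mathbf k]};W_i)=e(J^{[k_0+1]},\mathbf I^{[\mathbf k]};W_i)$ for each $i=1,2,3$, and Theorem \ref{thm2.3}(ii) to write $e(\frak J;\mathfrak R(\mathbf F;W_i))=e((\frak I,\mathfrak R(\mathbf I;A)_+);\mathfrak R(\mathbf I;W_i))$ for each $i$ for which the height hypothesis holds. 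Thus both parts (a) and (b), for both (i) and (ii), reduce to the corresponding identities for the ideal-adic case.

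Next I would verify that the hypotheses transfer correctly under this reduction. The key point is that $\dim\overline{W}_i$, which is $\dim W_i/0_{W_i}:I^\infty$ with $I=(F_1)_1\cdots(F_s)_1$, depends only on $I$ and not on the filtration, so the dimension conditions $\dim\overline W_1=\dim\overline W_2=\dim\overline W_3$ (case (i)) and $\dim\overline W_3>\dim\overline W_h$ (case (ii)) are exactly the conditions appearing in \cite[Corollary 3.9]{VT1} applied to the short exact sequence $0\to \overline W_1\to\overline W_3\to\overline W_2\to 0$ — using \cite[Proposition 3.1 (ii)]{MV} to pass to $\overline W_i$, as in the proof of Theorem \ref{thm2.3}(i). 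Similarly the hypothesis $k_0+1+|\mathbf k|=\dim\overline W_3$ is the correct degree condition for mixed multiplicities of the type $(k_0+1,\mathbf k)$ in the ideal setting, and the height condition $\operatorname{ht}\frac{I+\mathrm{Ann}W_i}{\mathrm{Ann}W_i}>0$ is precisely what \cite[Corollary 3.9 (ii)]{VT1} needs for the Rees-module statements. For part (b) one also uses that $\mathfrak R(\mathbf I;W_i)$, as an $\mathfrak R(\mathbf I;A)$-module, fits into the short exact sequence $0\to\mathfrak R(\mathbf I;W_1)\to\mathfrak R(\mathbf I;W_3)\to\mathfrak R(\mathbf I;W_2)\to 0$ (multi-Rees modules are exact in the module argument since each graded piece $\mathbb I^{\mathbf n}W_i$ is obtained by tensoring with $W_i$ and taking the image in $W_3$), so \cite[Corollary 3.9]{VT1} applies directly.

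The remaining step is purely bookkeeping: combine the ideal-case identities from \cite[Corollary 3.9]{VT1} with the three (or two) instances of Theorem \ref{thm2.3} to rewrite them back in terms of $F,\mathbf F$ and $\frak J$, obtaining statements (i)(a), (i)(b), (ii)(a), (ii)(b). Corollary \ref{co2.4} is not strictly needed for the bare statement but can be used to cross-check (b) by expressing $e(\frak J;\mathfrak R(\mathbf F;W_i))$ as a sum of mixed multiplicities and then applying (a) term by term. I expect the only genuine subtlety — the ``hard part'' in an otherwise formal argument — to be the careful handling of the passage $W_i\rightsquigarrow\overline W_i$: one must check that killing $0_{W_i}:I^\infty$ is compatible with the short exact sequence in the precise sense required (namely that $\dim\overline W_1=\dim\overline W_3$ forces $0_{W_3}:I^\infty$ to meet $W_1$ in $0_{W_1}:I^\infty$, and that the induced sequence $0\to\overline W_1\to\overline W_3\to\overline W_2\to 0$ is still exact up to lower-dimensional error), and that this does not disturb the mixed multiplicities of top degree — all of which follows from \cite[Proposition 3.1 (ii)]{MV} and the additivity already packaged in \cite[Corollary 3.9]{VT1}, so no new ideas beyond Theorem \ref{thm2.3} are required.
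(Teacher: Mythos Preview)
Your argument for the (a)-parts matches the paper's exactly: reduce via Theorem \ref{thm2.3}(i) to the ideal case and quote \cite[Corollary 3.9]{VT1}. For the (b)-parts, however, the paper does \emph{not} invoke Theorem \ref{thm2.3}(ii). Instead it uses Corollary \ref{co2.4} to write $e(\frak J;\mathfrak R(\mathbf F;W_i))=\sum_{k_0+|\mathbf k|=d-1}e(\mathcal F^{[k_0+1]},\mathbf F^{[\mathbf k]};W_i)$ (with $\mathcal F$ the $\frak I$-adic filtration and $d=\dim W_i$, which is the same for all $i$ once the height hypothesis gives $\dim\overline W_i=\dim W_i$), and then applies the already-proved (a)-parts term by term. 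What you describe as an optional cross-check is in fact the paper's actual proof. Your primary route via Theorem \ref{thm2.3}(ii) plus \cite[Corollary 3.9(b)]{VT1} is also valid, but note that the short exact sequence of Rees modules you write down is \emph{not} exact in general: the kernel of $\mathbb I^{\mathbf n}W_3\to\mathbb I^{\mathbf n}W_2$ is $W_1\cap\mathbb I^{\mathbf n}W_3$, which need not equal $\mathbb I^{\mathbf n}W_1$. Fortunately this is irrelevant, since \cite[Corollary 3.9]{VT1} is stated for the original short exact sequence of $A$-modules and handles the Rees-module multiplicity internally; you do not need (and should not claim) exactness at the Rees level. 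Similarly, the ``hard part'' you flag --- compatibility of the passage $W_i\rightsquigarrow\overline W_i$ with the exact sequence --- is already packaged inside \cite[Corollary 3.9]{VT1} and needs no separate verification here.
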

\begin{proof} The proof of (i)(a) and (ii)(a):
We choose the reductions $$J= (F)_1, I_1=(F_1)_1, \ldots,
I_s=(F_s)_1$$ of filtrations $F, F_1, \ldots, F_s$, respectively.
Then by Theorem \ref{thm2.3} (i), we obtain
$$e(F^{[k_0+1]},\mathrm{\bf F}^{[\mathrm{\bf k}]}; W_i)= e(J^{[k_0+1]},\mathrm{\bf I}^{[\mathrm{\bf k}]};
W_i)$$ for all $1 \le i \le 3.$ Consequently, the part (i)(a) and
the part (ii)(a) follow  immediately  from \cite[Corollary 3.9
(i)(a) and (ii)(a)]{VT1}, respectively.

 The proof of (i)(b): Note that since $\mathrm{ht}\dfrac{I + \mathrm{Ann}W_i}{\mathrm{Ann}W_i}
 >0,$  $\dim  \overline{W}_i= \dim {W}_i.$ Hence $\dim  {W}_1 =\dim {W}_2=\dim {W}_3.$
 Set $\dim W_i = d$ for $i = 1, 2, 3$ and
 $\mathcal F= \{\frak I^n\}_{n\geqslant 0}$ the $\frak I$-adic filtration. On one hand, by Corollary
 \ref{co2.4}, we have
$$e\big(\frak J; \mathfrak R(\mathrm{\bf F}; {W}_i)\big) = \sum_{k_0\:+\:|\mathrm{\bf k}|
=\;d-1}e\big(\mathcal F^{[k_0+1]},\mathrm{\bf F}^{[\mathrm{\bf
k}]}; {W}_i\big) \eqno(9)$$ for $i = 1, 2, 3$.  On the other hand
by (i)(a), it follows that
\begin{align*}
\sum_{k_0\:+\:|\mathrm{\bf k}| =\;d-1}e\big(\mathcal
F^{[k_0+1]},\mathrm{\bf F}^{[\mathrm{\bf k}]}; {W}_3\big) =
&\sum_{k_0\:+\:|\mathrm{\bf k}| =\;d-1}e\big(\mathcal
F^{[k_0+1]},\mathrm{\bf F}^{[\mathrm{\bf k}]}; {W}_1\big) \\ &+
\sum_{k_0\:+\:|\mathrm{\bf k}| =\;d-1}e\big(\mathcal
F^{[k_0+1]},\mathrm{\bf F}^{[\mathrm{\bf k}]}; {W}_2\big).
\end{align*}
Hence by (9), we obtain $e\big(\frak J; \mathfrak R(\mathrm{\bf
F}; {W}_3)\big) = e\big(\frak J; \mathfrak R(\mathrm{\bf F};
{W}_1)\big)+e\big(\frak J; \mathfrak R(\mathrm{\bf F};
{W}_2)\big).$ The proof of (ii)(b): Similarly to the proof of
(i)(b), by (9) and (ii)(a), we  get (ii)(b).
\end{proof}
We also easily prove the following  additivity and reduction
formulas for mixed multiplicities and the multiplicity  of Rees
modules of filtrations which  are generalizations  of
\cite[Theorem 3.2]{VT1} and \cite[Theorem 17.4.8]{HS}.
\begin{corollary}\label{co2.6}
Let $ F $ be an $\mathfrak{m}$-primary good filtration and  $
\mathbf{F}$ be good filtrations  in $A$ such that $I= (F_1)_1
\cdots (F_s)_1$ is not contained in $\sqrt{\mathrm{Ann}M}$.  Set
$\overline{M}=\dfrac{M}{0_M: I^\infty}.$ Denote by  $\Pi$ the set
of all prime ideals $\frak p $ of  $A$ such that $\frak p \in
\mathrm{Min}(A/\mathrm{Ann}\overline{M})$ and $\dim A/\frak p =
\dim \overline{M}.$ Let $\frak I$ be an $\frak m$-primary ideal of
$A$. Then we have
\begin{enumerate}[{\rm (i)}]
\item $e(F^{[k_0+1]},\mathrm{\bf F}^{[\mathrm{\bf k}]};M)=
\sum_{\frak p \in \Pi}\ell({M}_{\frak p})e(F^{[k_0+1]},\mathrm{\bf
F}^{[\mathrm{\bf k}]};A/\frak p).$ \item     If
$\mathrm{ht}\dfrac{I + \mathrm{Ann}M}{\mathrm{Ann}M}> 0$, then
$$e\big((\frak I,\mathfrak R(\mathrm{\bf F}; A)_+) ;\mathfrak R(\mathrm{\bf F}; M) \big)
= \sum_{\frak p \in \Pi}\ell({M}_{\frak p})e\big((\frak
I,\mathfrak R(\mathrm{\bf F}; A)_+);
 \mathfrak R(\mathrm{\bf F}; A/\frak p)\big).$$
\end{enumerate}
\end{corollary}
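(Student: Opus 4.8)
The plan is to deduce both formulas from the known additivity and reduction formulas for mixed multiplicities and for multiplicities of Rees modules of \emph{ideals} (\cite[Theorem 3.2]{VT1}, and \cite[Theorem 17.4.8]{HS} for the Rees-module case), by transferring everything through Theorem \ref{thm2.3} to the reductions $J=(F)_1$ of $F$ and $I_i=(F_i)_1$ of $F_i$. A preliminary observation that makes the transfer clean is that $I=(F_1)_1\cdots(F_s)_1=I_1\cdots I_s$, whence $0_M:(I_1\cdots I_s)^\infty=0_M:I^\infty$, so the module $\overline M$, and therefore the finite set $\Pi$, is literally the one that governs the ideal setting as well.

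For part (i), I would first note that $I\not\subseteq\frak p$ for every $\frak p\in\Pi$: since $\frak p$ is a minimal prime of $\mathrm{Ann}\overline M$, the localization $\overline M_{\frak p}$ has finite length, so $I\subseteq\frak p$ would force $I_{\frak p}^{\,n}\overline M_{\frak p}=0$ for $n\gg 0$ by Nakayama, contradicting $0_{\overline M}:I^\infty=0$. This gives simultaneously that $I\not\subseteq\sqrt{\mathrm{Ann}(A/\frak p)}=\frak p$, so Theorem \ref{thm2.3} (i) is applicable to $A/\frak p$ as well as to $M$, and that $M_{\frak p}=\overline M_{\frak p}$, so the length weight $\ell(M_{\frak p})$ is exactly the one occurring in \cite[Theorem 3.2]{VT1}. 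Then Theorem \ref{thm2.3} (i) gives $e(F^{[k_0+1]},\mathbf F^{[\mathbf k]};M)=e(J^{[k_0+1]},\mathbf I^{[\mathbf k]};M)$ and likewise with $A/\frak p$ in place of $M$, while \cite[Theorem 3.2]{VT1} gives $e(J^{[k_0+1]},\mathbf I^{[\mathbf k]};M)=\sum_{\frak p\in\Pi}\ell(M_{\frak p})\,e(J^{[k_0+1]},\mathbf I^{[\mathbf k]};A/\frak p)$; combining these identities yields (i).

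Part (ii) is proved in the same way, now invoking Theorem \ref{thm2.3} (ii) and the Rees-module version of the ideal formula. Here the hypothesis $\mathrm{ht}\frac{I+\mathrm{Ann}M}{\mathrm{Ann}M}>0$ already forces $\dim(0_M:I^\infty)<\dim M$, so $\dim\overline M=\dim M$, $\Pi$ is the set of top-dimensional minimal primes of $\mathrm{Ann}M$, and again $I\not\subseteq\frak p$ and $M_{\frak p}=\overline M_{\frak p}$ for $\frak p\in\Pi$; in particular $\mathrm{ht}\frac{I+\frak p}{\frak p}>0$ in the domain $A/\frak p$, so Theorem \ref{thm2.3} (ii) applies to $M$ and to each $A/\frak p$ with the given $\frak m$-primary ideal $\frak I$. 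Writing $\frak J=(\frak I,\frak R(\mathbf F;A)_+)$ and $\frak U=(\frak I,\frak R(\mathbf I;A)_+)$, Theorem \ref{thm2.3} (ii) gives $e(\frak J;\frak R(\mathbf F;M))=e(\frak U;\frak R(\mathbf I;M))$ and the analogous identity for $A/\frak p$, while the additivity--reduction formula for multiplicities of Rees modules of ideals (\cite[Theorem 3.2]{VT1}; see also \cite[Theorem 17.4.8]{HS}) gives $e(\frak U;\frak R(\mathbf I;M))=\sum_{\frak p\in\Pi}\ell(M_{\frak p})\,e(\frak U;\frak R(\mathbf I;A/\frak p))$. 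Assembling these proves (ii).

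The routine translations via Note \ref{no2.1} and Theorem \ref{thm2.3} are harmless; the one point that needs genuine care is the bookkeeping around $\Pi$ --- checking that the set of primes and the weights $\ell(M_{\frak p})$, all defined in the statement through $\overline M$, are exactly those appearing in \cite[Theorem 3.2]{VT1} and \cite[Theorem 17.4.8]{HS} after passage to the reductions, i.e. the verifications $I\not\subseteq\frak p$ and $M_{\frak p}=\overline M_{\frak p}$ for $\frak p\in\Pi$. Once that is in place, both parts are formal consequences of Theorem \ref{thm2.3} and the cited ideal-theoretic results.
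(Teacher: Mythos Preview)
Your proof of (i) is essentially identical to the paper's: both choose the reductions $J=(F)_1$, $I_i=(F_i)_1$, apply Theorem~\ref{thm2.3}(i) to $M$ and to each $A/\frak p$, and invoke \cite[Theorem~3.2]{VT1}. Your additional verifications that $I\not\subseteq\frak p$ and $M_{\frak p}=\overline M_{\frak p}$ for $\frak p\in\Pi$ are well observed; the paper leaves these implicit.

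For (ii) you take a genuinely different route. The paper does \emph{not} use Theorem~\ref{thm2.3}(ii) at all here: instead it expands $e\big((\frak I,\frak R(\mathbf F;A)_+);\frak R(\mathbf F;M)\big)$ as the sum of mixed multiplicities $\sum_{k_0+|\mathbf k|=d-1}e(\mathcal F^{[k_0+1]},\mathbf F^{[\mathbf k]};M)$ via Corollary~\ref{co2.4} (with $\mathcal F$ the $\frak I$-adic filtration), applies part~(i) term by term, interchanges the order of summation, and then collapses each inner sum back via Corollary~\ref{co2.4} applied to $A/\frak p$. Your approach instead transfers both sides to the ideal setting by Theorem~\ref{thm2.3}(ii) and then cites the ideal-case Rees-module additivity formula directly. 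Your argument is cleaner and more parallel to part~(i); the paper's argument has the minor advantage of being entirely self-contained, since it derives (ii) from the already-proved (i) and Corollary~\ref{co2.4} without needing the Rees-module version of \cite[Theorem~3.2]{VT1} as an independent input. Both are correct.
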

\noindent {\it Proof.} The proof of (i): Choose the reductions $J=
(F)_1, I_1=(F_1)_1, \ldots, I_s=(F_s)_1$ of filtrations $F, F_1,
\ldots, F_s$, respectively. Then on one hand by Theorem
\ref{thm2.3} (i), we get $e(F^{[k_0+1]},\mathrm{\bf
F}^{[\mathrm{\bf k}]};M)=e(J^{[k_0+1]},\mathrm{\bf
I}^{[\mathrm{\bf k}]};M)$ and $e(F^{[k_0+1]},\mathrm{\bf
F}^{[\mathrm{\bf k}]};A/\frak p)= e(J^{[k_0+1]},\mathrm{\bf
I}^{[\mathrm{\bf k}]};A/\frak p)$ for all $\frak p \in \Pi.$ On
the other hand by \cite[Theorem 3.2]{VT1}, we have
$$e(J^{[k_0+1]},\mathrm{\bf I}^{[\mathrm{\bf k}]};M)= \sum_{\frak p
\in \Pi}\ell({M}_{\frak p})e(J^{[k_0+1]},\mathrm{\bf
I}^{[\mathrm{\bf k}]};A/\frak p).$$  So
$e(F^{[k_0+1]},\mathrm{\bf F}^{[\mathrm{\bf k}]};M)= \sum_{\frak p
\in \Pi}\ell({M}_{\frak p})e(F^{[k_0+1]},\mathrm{\bf
F}^{[\mathrm{\bf k}]};A/\frak p).$

The proof of (ii): Set $\dim M
=d$ and  $\mathcal F= \{\frak I^n\}_{n\geqslant 0}$ the $\frak
I$-adic filtration. By Corollary \ref{co2.4}  and (i) we have
\begin{align*}
 e\big((\frak I,\mathfrak R(\mathrm{\bf F}; A)_+); \mathfrak R(\mathrm{\bf F}; M)\big) &
 = \sum_{k_0\:+\:|\mathrm{\bf k}|
=\;d-1}e\big(\mathcal F^{[k_0+1]},\mathrm{\bf F}^{[\mathrm{\bf k}]}; M\big)\\
& = \sum_{k_0\:+\:|\mathrm{\bf k}| =\;d-1}\Big(\sum_{\frak p \in
\Pi}\ell({M}_{\frak p})e(\mathcal F^{[k_0+1]},
\mathrm{\bf F}^{[\mathrm{\bf k}]};A/\frak p)\Big)\\
& =\sum_{\frak p \in \Pi}\ell({M}_{\frak p})
\Big(\sum_{k_0\:+\:|\mathrm{\bf k}|
=\;d-1}e(\mathcal F^{[k_0+1]},\mathrm{\bf F}^{[\mathrm{\bf k}]};A/\frak p)\Big)\\
& =\sum_{\frak p \in \Pi}\ell({M}_{\frak p}) e\big((\frak
I,\mathfrak R(\mathrm{\bf F}; A)_+); \mathfrak R(\mathrm{\bf F};
A/\frak p)\big). \quad \square
\end{align*}

Denote by $U$ the set of all non-zero divisors of $A.$ Recall that
an $A$-module $N$ has {\it rank} $r$ if $U^{-1}N$ (the
localization of $N$ with respect to $U$) is a free
$U^{-1}A$-module of rank $r.$

Now, using Theorem \ref{thm2.3} (i) and \cite [Theorem 3.4]{VT2}
we  give the following.

\begin{corollary}\label{co3.b}  Let $ F$ be an $\mathfrak{m}$-primary good filtration
and $\mathbf{F}$ be good filtrations of ideals in $A$ such that
$I= (F_1)_1 \cdots (F_s)_1$ is not contained in
$\sqrt{\mathrm{Ann}M}$. Suppose that $M$ has rank $r>0$. Then
 $e\big({F}^{[k_0+1]},{\mathrm{\bf F}}^{[\mathrm{\bf k}]}; M\big)
= e\big(F^{[k_0+1]},\mathrm{\bf F}^{[\mathrm{\bf k}]};
A\big)\mathrm{rank}_AM.$
\end{corollary}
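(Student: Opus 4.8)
The plan is to transfer the statement, which concerns good filtrations, to the already-known rank formula for mixed multiplicities of a module with respect to \emph{ideals}, using the main theorem to bridge the two settings.

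First I would name the natural reductions coming from the filtrations themselves: put $J=(F)_1$ and $I_i=(F_i)_1$ for $i=1,\dots,s$. These are reductions because a good filtration is always $(F)_1$-good, and $I=(F_1)_1\cdots(F_s)_1=I_1\cdots I_s$. Since $F$ is $\mathfrak m$-primary, $J$ is an $\mathfrak m$-primary ideal, and by hypothesis $I\not\subseteq\sqrt{\mathrm{Ann}M}$. Because $M$ has positive rank, every minimal prime of $A$ lies in $\mathrm{Supp}M$, so $\sqrt{\mathrm{Ann}M}=\sqrt{0}=\sqrt{\mathrm{Ann}A}$; hence $I\not\subseteq\sqrt{\mathrm{Ann}A}$ as well, the mixed multiplicities $e(J^{[k_0+1]},\mathbf I^{[\mathbf k]};M)$ and $e(J^{[k_0+1]},\mathbf I^{[\mathbf k]};A)$ are both defined, and the same computation shows $\dim M/0_M:I^\infty=\dim A/0_A:I^\infty$, so both are of the common type $(k_0+1,\mathbf k)$.

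Next I would apply Theorem \ref{thm2.3}(i) twice — once to the module $M$ and once to the module $A$ (legitimate since $\mathrm{Ann}A=0$) — to get $e(F^{[k_0+1]},\mathbf F^{[\mathbf k]};M)=e(J^{[k_0+1]},\mathbf I^{[\mathbf k]};M)$ and $e(F^{[k_0+1]},\mathbf F^{[\mathbf k]};A)=e(J^{[k_0+1]},\mathbf I^{[\mathbf k]};A)$. This reduces the whole question to ideal-adic filtrations.

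Finally I would invoke \cite[Theorem 3.4]{VT2}, the rank formula for mixed multiplicities of a finitely generated module of positive rank with respect to ideals, applied to the $\mathfrak m$-primary ideal $J$ and the ideals $\mathbf I$ with $I_1\cdots I_s=I\not\subseteq\sqrt{\mathrm{Ann}M}$: it gives $e(J^{[k_0+1]},\mathbf I^{[\mathbf k]};M)=e(J^{[k_0+1]},\mathbf I^{[\mathbf k]};A)\,\mathrm{rank}_AM$. Chaining the three equalities yields the claim. I do not expect a genuine obstacle here: once the right reductions are named the argument is formal, and the only point that needs a line of verification is that the hypotheses of \cite[Theorem 3.4]{VT2} are literally met by $J,\mathbf I$ and that both sides of the final identity are mixed multiplicities of the common type $(k_0+1,\mathbf k)$.
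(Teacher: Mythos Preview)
Your proposal is correct and follows essentially the same route as the paper: choose the reductions $J=(F)_1$ and $I_i=(F_i)_1$, apply Theorem~\ref{thm2.3}(i) to both $M$ and $A$, and then invoke \cite[Theorem 3.4]{VT2} to obtain the rank formula for the ideal-adic case. If anything, you are more explicit than the paper in checking that $I\not\subseteq\sqrt{\mathrm{Ann}A}$ and that the types match, so the argument goes through without issue.
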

\begin{proof} Choose the reductions $J= (F)_1,
I_1=(F_1)_1, \ldots, I_s=(F_s)_1$ of filtrations $F, F_1, \ldots,
F_s$, respectively. Then by Theorem \ref{thm2.3} (i), we have
$$e(F^{[k_0+1]},\mathrm{\bf F}^{[\mathrm{\bf
k}]};A)=e(J^{[k_0+1]},\mathrm{\bf I}^{[\mathrm{\bf k}]};A)$$ and
$e(F^{[k_0+1]},\mathrm{\bf F}^{[\mathrm{\bf
k}]};M)=e(J^{[k_0+1]},\mathrm{\bf I}^{[\mathrm{\bf k}]};M).$ By
\cite [Theorem 3.4]{VT2}, we obtain
$$e\big({J}^{[k_0+1]},{\mathrm{\bf I}}^{[\mathrm{\bf k}]}; M\big)
= e\big(J^{[k_0+1]},\mathrm{\bf I}^{[\mathrm{\bf k}]};
A\big)\mathrm{rank}_AM.$$ So $e\big({F}^{[k_0+1]},{\mathrm{\bf
F}}^{[\mathrm{\bf k}]}; M\big) = e\big(F^{[k_0+1]},\mathrm{\bf
F}^{[\mathrm{\bf k}]}; A\big)\mathrm{rank}_AM.$
\end{proof}

\noindent {\bf Acknowledgement:} {\small This research is funded by Vietnam National Foundation for Science and Technology Development (NAFOSTED) under grant number 101.04.2015.01.}

{\small
}

\end{document}